\def\@seccntformat#1{\csname the#1\endcsname.\ } 
\date{}
\newenvironment{proof}[1][\hspace{-1.0ex}]%
{\par\addvspace{1mm}{\sc Proof\hspace{1.0ex}{#1}.} }%
{\quad$\blacktriangle$\par\addvspace{1mm}}
\newif\ifNoRemark
\def\addtheorem#1#2#3#4{
\ifthenelse{\equal{#2}{}}{}%
{\ifthenelse{\expandafter\isundefined\csname the#2\endcsname}{\newcounter{#2}}{}}
\newenvironment{#1}[1][\global\NoRemarktrue]
{\par\addvspace{2mm plus 0.5mm minus 0.2mm}\noindent 
{\bf #3}\ifthenelse{\equal{#2}{}}{}%
{\refstepcounter{#2}{\bf ~\csname the#2\endcsname}}%
{\bf \vphantom{##1}\ifNoRemark.\ \else\ (##1).\fi}\begingroup #4}%
{\endgroup\par\addvspace{1mm plus 0.5mm minus 0.2mm}\global\NoRemarkfalse}
\expandafter\newcommand\csname b#1\endcsname{\begin{#1}}
\expandafter\newcommand\csname e#1\endcsname{\end{#1}}
}
\title{Perfect codes in Doob graphs%
\thanks{The work was supported by the Russian Science Foundation (grant 14-11-00555)}%
}
\author{Denis~S.~Krotov%
\thanks{Sobolev Institute of Mathematics, Novosibirsk, Russia; Novosibirsk State University, Novosibirsk, Russia. 
E-mail: \href{mailto:krotov@math.nsc.ru}{krotov@math.nsc.ru}}%
}
\def\SSS{{\mathcal E}}
\def\TR{{\mathrm{T}}}
\def\Sh{{\mathrm{Sh}}}
\def\KK{K}
\def\DD#1#2{D(#1,#2)}
\def\oomega{{\rotatebox[origin=c]{180}{$\omega$}}}
\begin{document}
\maketitle
\begin{abstract}
We study $1$-perfect codes in Doob graphs $\DD{m}{n}$.
We show that such codes that are linear over $\mathrm{GR(4^2)}$
exist if and only if $n=(4^{\gamma+\delta}-1)/3$
and $m=(4^{\gamma+2\delta}-4^{\gamma+\delta})/6$ for some integers
$\gamma \ge 0$ and $\delta>0$.
We also prove necessary conditions on $(m,n)$
for $1$-perfect codes that are linear over $\mathbb{Z}_4$
(we call such codes additive) to exist in $\DD{m}{n}$ graphs;
for some of these parameters,
we show the existence of codes.
For every $m$ and $n$ satisfying 
$2m+n = (4^\mu - 1)/3$ and 
$m\leq (4^\mu-5\cdot 2^{\mu-1}+1)/9$, 
we prove the existence of $1$-perfect codes in $\DD{m}{n}$,
without the restriction to admit some group structure.

Keywords:
perfect codes,
Doob graphs,
distance regular graphs.

MSC2010: 94B05, 94B25, 05B40
\end{abstract}

\section{Introduction}\label{s:intro}

A connected regular graph is called \emph{distance regular}
if every bipartite subgraph 
generated by two cocentered spheres of different radius
is biregular. 
A set of vertices of a graph or any other discrete metric space is called an \emph{$e$-perfect code}, 
or simply a \emph{perfect code}, if the vertex set
is partitioned into the radius-$e$ balls centered in the code vertices.
The codes of cardinality $1$ and the $0$-perfect codes are called \emph{trivial} perfect codes.

The perfect codes in distance regular graphs are objects 
that are highly interesting from the point 
of view of both coding theory and algebraic combinatorics.
On one hand, these codes are error correcting codes that attain
the sphere-packing bound (``perfect'' means ``extremely good'').
On the other hand, they possess algebraic properties that are connected
with the algebraic properties of the distance regular graph;
a perfect code is a some kind of divisor \cite[Ch.~4]{CDS} of the graph.

It may safely be said that the most important class of distance regular graphs, 
for coding theory, is the Hamming graphs. 
The \emph{Hamming graph} $H(n,q)$ 
is the Cartesian product 
of $n$ copies of the complete graph of order $q$.
For the Hamming graphs $H(n,q)$, 
the study of possible parameters of perfect codes 
is completed only if $q$ is a prime power.
In this case, as was shown in \cite{Tiet:1973,ZL:1973}, 
there are no nontrivial perfect codes except 
the $1$-perfect codes in $H((q^m-1)/(q-1),q)$ \cite{Hamming:50,Golay:49}, 
the $3$- and $2$-perfect Golay codes in $H(23,2)$ and $H(11,3)$, 
respectively \cite{Golay:49},
and the $e$-perfect binary repetition codes in $H(2e+1,2)$.
In the case of non-prime-power $q$, no nontrivial perfect codes are known,
and the parameters for which the nonexistence is not proven are restricted
by $1$-perfect codes and $2$-perfect codes 
(the last case is solved for some values of $q$),
see \cite{Heden:2010:non-prime} for a survey of the known results in this area.

We briefly mention two other infinite classes of distance regular graphs of unbounded diameter
that occur in coding theory applications. 
The Johnson graph $J(n,w)$ can be considered
as the distance-$2$ graph of a radius-$w$ sphere in $H(n,2)$.
The well-known Delsarte conjecture states that there are
no nontrivial perfect codes in the Johnson graphs.
In general, the problem is open; we refer \cite{Etzion:2007:Johnson} 
for a survey of known nonexistence results and mention a later result \cite{Gordon:2006}, 
where the nonexistence of $1$-perfect codes in $J(n,w)$ is computationally proved
for ``small'' values of $n \le 2^{250}$.
The nonexistence of nontrivial perfect codes in the Grassmann graphs $J_q(n,w)$
was proven in \cite{Chihara87}; 
a relatively simple proof can be found in \cite{MarZhu:1995}.

The Doob graph $\DD{m}{n}$ is a distance regular graph of diameter $2m+n$ 
with the same parameters 
as the Hamming graph $H(2m+n,4)$. 
As noted in \cite{KoolMun:2000},
nontrivial $e$-perfect codes in $\DD{m}{n}$ can only exist
when $e=1$ and $2m+n=(4^\mu-1)/3$ for some integer $\mu$
(with exactly the same proof as for $H(2m+n,4)$).
In \cite{KoolMun:2000}, Koolen and Munemasa constructed 
$1$-perfect codes in the Doob graphs of diameter $5$.

In the current paper, we show the existence of $1$-perfect codes
in $\DD{m}{n}$ in approximately two-thirds (as $\mu\to\infty$) 
of possible values of $(m,n)$
satisfying $2m+n=(4^\mu-1)/3$.
Additionally, we study the existence of linear, 
over the rings $\mathrm{GR(4^2)}$ and $\mathbb{Z}_4$,
$1$-perfect codes in Doob graphs.

In Section~\ref{s:shrik}, we define the Doob graphs
with underlying structure of a module over the ring
$\mathrm{GR}(4^2)$ or $\mathbb{Z}_4$;
also, we define linear (over $\mathrm{GR}(4^2)$)
and additive (over $\mathbb{Z}_4$) codes.
In Section~\ref{s:par}, we prove some restrictions
on the parameters of a Doob graph that can contain an additive $1$-perfect code,
in terms of parameters $\Gamma$, $\Delta$ of the factorgroup
$\mathbb{Z}_2^\Gamma \times\mathbb{Z}_4^\Delta$
of cosets of the code. 
The proof exploits ideas from \cite{BorRif:1999}.
In Section~\ref{s:constr}, 
we construct linear $1$-perfect codes 
for each admissible parameters.
In Section~\ref{s:add},
we construct additive $1$-perfect codes for each set of parameters
meeting the conditions of Section~\ref{s:par} with even $\Delta$.
In Section~\ref{s:3}, we construct an example 
of additive $1$-perfect code
with odd $\Delta =3$.
In Section~\ref{s:nonl}, we construct $1$-perfect codes
in $\DD{m}{n}$ for each admissible diameter $2m+n$ 
and small $m$ (approximately, $m \lesssim n$).
In the last section, we list open problems concerning the existence on $1$-perfect codes in Doob graph.
\section{Representation of the Doob graphs}\label{s:shrik}
Let $\mathbb{Z}$ denote the ring of integers, and let $\mathbb{Z}_p = \mathbb{Z} / p\mathbb{Z}$
denote the factor-ring of residue classes of $\mathbb{Z}$ modulo $p$.
If $\mathbb{M}$ is a ring or a module over a ring, then $\mathbb{M}^+$ denotes the additive group of $\mathbb{M}$.
The \emph{Eisenstein integers} $\mathbb{E}$ are the complex numbers of the form
$$ z = a + b \omega, \qquad 
\omega = \frac{-1+i\sqrt{3}}2 = e^{2\pi i/3}, \quad a,b\in \mathbb{Z}.$$
Given $p\in \mathbb{E} \backslash \{0\}$, we denote 
by $\mathbb{E}_p$ the ring $\mathbb{E}/p\mathbb{E}$ 
of residue classes of $\mathbb{E}$ modulo $p$. 
We are interested in the two cases 
$\mathbb{E}_2$ and $\mathbb{E}_4$
(see Fig.~\ref{fig:1}).

$\mathbb{E}_2$ is the Galois field 
$\mathrm{GF}(2^2)$ of characteristic $2$. 
Its elements are $[0]_2$, $[1]_2$, $[\omega]_2$, and $[\oomega]_2$, where $\oomega=\omega^2$,
and $[x]_p=x+p\mathbb{E}$;
but in what follows, we will omit the braces $[\ ]_p$
when naming the residue classes from $\mathbb{E}_p$, $p=2,4$.

$\mathbb{E}_4$ is the Galois ring 
$\mathrm{GF}(4^2)$ of characteristic $4$.
Its elements are 
$2b+a$, $a,b\in \{0, 1, \omega, \oomega\}$.
The set of units 
$\{ 1, -\omega, 
\oomega, -1, 
\omega, -\oomega \}$ 
will be denoted by $\SSS$.
\begin{lemma}\label{l:4cosets}
The set of all elements of $\mathbb{E}_4$ is partitioned into four 
multiplicative cosets of $\SSS$:
\begin{eqnarray*}
0\SSS &=& \{0\}, \\
\SSS &=& \{1,\ 2\omega+\omega,\ 
\oomega,\ 2+1,\ 
\omega,\ 2\oomega+\oomega\} 
\quad\mbox{(Fig.~\ref{fig:1}, solid circle)},\\
2\SSS &=& \{2,\ 2\omega,\ 2\oomega\}, \\
\psi \SSS &=& \{ 2+\omega,\ 2\omega+1,\ 2\omega+\oomega,\ 
2\oomega+\omega,\ 2\oomega+1,\ 2+\oomega\}
\quad\mbox{(Fig.~\ref{fig:1}, dashed circle)},
\end{eqnarray*}
where $\psi$ is an arbitrary representative of the corresponding coset, 
say, $\psi = 2+\omega$. 
The set $2\SSS$ is exactly the set of nontrivial zero divisors 
of the ring $\mathbb{E}_4$, 
while the set of regular elements is $\SSS \cup \psi\SSS$.
\end{lemma}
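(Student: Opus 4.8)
The plan is to work directly with the ring structure of $\mathbb{E}_4 = \mathrm{GF}(4^2)$, exploiting the fact that it is a local ring with maximal ideal $2\mathbb{E}_4$. First I would verify the given partition by a counting and closure argument. The group of units $\SSS$ has order $6$ (as listed, it is the cyclic group generated by $-\omega$, since the units of a Galois ring $\mathrm{GR}(4^2)$ form a group of order $(4-1)\cdot 4 = 12$, but here $\SSS$ is specified as the particular $6$-element subgroup $\{\pm 1, \pm\omega, \pm\oomega\}$). Each nonzero multiplicative coset $x\SSS$ has size $|\SSS|/|\mathrm{Stab}(x)|$, so I would compute the stabilizers: for a unit $x$, multiplication by $\SSS$ is free and $|x\SSS| = 6$; for the zero divisor $x = 2$, the annihilator forces $2\cdot u = 2\cdot u'$ whenever $u \equiv u' \pmod 2$, collapsing the orbit to size $3$. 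This accounts for $1 + 6 + 3 + 6 = 16 = |\mathbb{E}_4|$, confirming that the four listed sets partition the ring provided they are genuinely disjoint $\SSS$-orbits.

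Next I would establish the algebraic characterization of $2\SSS$ as the set of nontrivial zero divisors. The key structural fact is that $\mathbb{E}_4$ is local with residue field $\mathbb{E}_4 / 2\mathbb{E}_4 \cong \mathbb{E}_2 = \mathrm{GF}(4)$, and the reduction map $\pi\colon \mathbb{E}_4 \to \mathbb{E}_2$ sends $x$ to its class modulo $2$. An element of a finite local ring is a unit if and only if its image in the residue field is nonzero, and is a zero divisor otherwise; the unique maximal ideal $2\mathbb{E}_4 = \ker\pi$ is precisely the set of non-units. I would therefore argue that $x$ is a regular (invertible) element exactly when $\pi(x) \neq 0$, which happens for the twelve elements of $\SSS \cup \psi\SSS$, and $x$ is a zero divisor exactly when $\pi(x) = 0$, i.e. $x \in 2\mathbb{E}_4 = \{0\} \cup 2\SSS$. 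Removing $0$ leaves $2\SSS = \{2, 2\omega, 2\oomega\}$ as the nontrivial zero divisors, which is the claimed assertion.

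Finally I would confirm the coset labels $\SSS$ and $\psi\SSS$ are correctly separated by examining $\pi$ on the unit group: the twelve units split into two $\SSS$-cosets according to which fiber of $\pi$ they land in. Since $\pi(\SSS) = \{[1]_2, [\omega]_2, [\oomega]_2\}$ covers the three nonzero residues and $\SSS$ has order $6$ while each fiber over a nonzero residue has size $4$, one checks that $\SSS$ meets each nonzero fiber in exactly two elements; the complementary coset $\psi\SSS$ (with $\psi = 2+\omega$ a unit satisfying $\psi \notin \SSS$) fills out the remaining six units. The main obstacle here is purely bookkeeping: I must check that $\psi = 2+\omega$ is indeed not in $\SSS$, so that $\psi\SSS$ is a genuinely different coset, and then verify the explicit membership of each listed element by direct multiplication modulo $4$. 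Everything else follows from the general theory of finite local (Galois) rings, so the argument reduces to this finite verification plus the order count already carried out.
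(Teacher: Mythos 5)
Your proof is correct. Note that the paper itself supplies no proof of this lemma: it is asserted as a direct finite verification in $\mathbb{E}/4\mathbb{E}$, with Fig.~\ref{fig:1} serving as the visual evidence. Your argument is a more structural substitute for that verification, and every step holds up: $\SSS$ is indeed the cyclic subgroup of order $6$ generated by $-\omega$ inside the unit group of order $4^2-2^2=12$; the orbit counting $1+6+3+6=16$ is right (the orbit of $2$ collapses to size $3$ precisely because $2u$ depends only on $u$ bmod $2$); and since $\mathbb{E}_4$ is a finite local ring with maximal ideal $2\mathbb{E}_4=\ker\pi$, the dichotomy ``unit iff $\pi(x)\ne 0$, zero divisor iff $\pi(x)=0$'' gives exactly the claimed description of $2\SSS$ as the nontrivial zero divisors and $\SSS\cup\psi\SSS$ as the regular elements. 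The only genuinely non-automatic point is the one you correctly isolate, namely that $\psi=2+\omega$ is a unit not lying in $\SSS$ (e.g.\ because the two elements of $\SSS$ reducing to $[\omega]_2$ are $\omega$ and $3\omega$, and $2+\omega$ equals neither modulo $4$), which forces $\psi\SSS$ to be the second unit coset. What your approach buys over the paper's bare assertion is that the sixteen-element membership table need not be written out; what it costs is the appeal to the general theory of finite local (Galois) rings, which the paper avoids by simply listing the elements.
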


\begin{figure}[htb]
\noindent\mbox{}\hfill
\begin{tikzpicture}[
cell/.style={
fill=white,
fill opacity=0.7,
thin,
anchor=south,
},
cll/.style={
thin,
anchor=south,
},
nz/.style={circle,fill=white,draw=black, 
           inner sep=1.5pt},
zz/.style={circle,fill=black,draw=black, 
           inner sep=1.5pt},
scale=1.2]
\begin{scope}
\clip (-2.7,-1.95) rectangle (4.2,2.2);
\draw[xslant=0.577,ystep=.866,xstep=1,dotted] (-4.9,-2.1) grid (5.4,3.1);
\draw[xslant=-0.577,ystep=9.866,xstep=1,dotted] (-3.4,-2.1) grid (6.4,3.1);
\end{scope}
\begin{scope}
\clip [xslant=-0.577] (-1.4,-1.25) rectangle (2.4,2.2);
\draw[xslant=0.577,ystep=.866,xstep=1,draw=yellow!80!green,very thick] (-4.9,-2.1) grid (5.4,3.9);
\draw[xslant=-0.577,ystep=9.866,xstep=1,draw=yellow!80!green,very thick] (-3.4,-2.1) grid (6.4,3.9);
\end{scope}
\draw[draw=black,->] (-2.7,0) -- (4.8,0) node[anchor=north east]{$\scriptstyle\mathrm{Re}$};
\draw[draw=black,->] (0,-1.95) -- (0,2.6) node[anchor=north east]{$\scriptstyle\mathrm{Im}$};
\draw [thick,draw=blue] (0,0) circle (1);
\draw [thick,draw=blue,densely dashed] (0,0) circle (1.72);
\foreach \angl in {0,120,240}
{\draw [draw=blue] (\angl:2) +(120:0.15) -- +(180:0.15) -- +(240:0.15) -- +(300:0.15) -- +(0:0.15) -- +(60:0.15);}
\draw (-4,0) \foreach \mrk in {$2$,$0$,$2$,$0$} {++(2,0)  node [cell] {\mrk} node [cll] {\mrk} node [zz] {}}
(-3,0) \foreach \mrk in {$3$,$1$,$3$} {++(2,0)  node [cell] {\mrk} node [cll] {\mrk} node [nz] {}};
\draw (120:2) ++(-3,0) \foreach \mrk in {$2\oomega{+}1$,$2\omega{+}1$,$2\oomega{+}1$,$2\omega{+}1$} {++(2,0)  node [cell] {\mrk} node [cll] {\mrk} node [nz] {}}
++(-7,0) \foreach \mrk in {$2\omega$,$2\oomega$,$2\omega$} {++(2,0)  node [cell] {\mrk} node [cll] {\mrk} node [zz] {}};
\draw (240:2) ++(-3,0) \foreach \mrk in {$2\omega{+}1$,$2\oomega{+}1$,$2\omega{+}1$,$2\oomega{+}1$} {++(2,0)  node [cell] {\mrk} node [cll] {\mrk} node [nz] {}}
++(-7,0) \foreach \mrk in {$2\oomega$,$2\omega$,$2\oomega$} {++(2,0)  node [cell] {\mrk} node [cll] {\mrk} node [zz] {}};
\draw 
(120:1) ++(-3,0) \foreach \mrk in {$2{+}\omega$,$2\omega{+}\oomega$,$\omega$,$3\oomega$,$2{+}\omega$,$2\omega{+}\oomega$,$\omega$} {++(1,0)  node [cell] {\mrk} node [cll] {\mrk} node [nz] {}};
\draw 
(240:1) ++(-3,0) \foreach \mrk in {$2{+}\oomega$,$2\oomega{+}\omega$,$\oomega$,$3\omega$,$2{+}\oomega$,$2\oomega{+}\omega$,$\oomega$} {++(1,0)  node [cell] {\mrk} node [cll] {\mrk} node [nz] {}};
\draw (0,0) node [circle,fill=blue,draw=black, 
           inner sep=2pt] {};
\fill [xslant=-0.577,fill=black,opacity=0.03] (-1.4,-1.25) rectangle (2.4,2.2);
\end{tikzpicture}
\hfill\mbox{}
\caption{Representation of the Shrikhande graph
as a Cayley graph of $\mathbb{E}/4\mathbb{E}\simeq\mathrm{GR}(4^2)$. 
The solid circle indicates the group of units $\SSS$;
the dashed circle indicates the coset $\psi\SSS$;
the three small hexagons indicate the coset $2\SSS$.}\label{fig:1}
\end{figure}
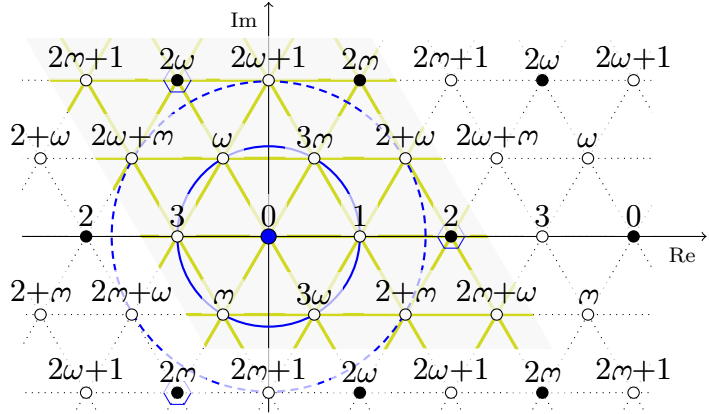

The \emph{Shrikhande graph} $\Sh$
is the Cayley graph of the additive group $\mathbb{E}_4^+$ of $\mathbb{E}_4$
with the generating set $\SSS$. That is, the vertex 
set is the set of elements of $\mathbb{E}_4$, two elements being adjacent
if and only if their difference is in $\SSS$.

The ring $\mathbb{E}_4$ itself can be considered as a module of type
$\mathbb{Z}_4^2$ over $\mathbb{Z}_4$. 
Every element $x$ of
$\mathbb{E}_4$ can be represented by a pair of coordinates 
in the basis  $(\omega, 1)$; denote this pair by $\widehat x$.
By $\widetilde x$, we denote the $2\times 2$ matrix over $\mathbb{Z}_4$ that correspond to the multiplication 
by $x$ in $\mathbb{E}_4$; that is, $z=xy$ is equivalent to $\widehat z^\TR = \widetilde x \widehat y^\TR$.
The Cayley graph of $\mathbb{Z}_4^{2+}$ with the generating set
$\widehat\SSS = \{\widehat 1, -\widehat\omega, \widehat\oomega, -\widehat 1, \widehat\omega, -\widehat\oomega\}= \{01,30,33,03,10,11\}$
will be denoted by $\Sh$, too.

We will use three different representations of 
the full $4$-vertex graph $\KK = K_4$ as a Cayley graph.
At first, it will be considered as 
the Cayley graph of $\mathbb{E}_2^+$
with the generating set $\{1,\omega, \oomega\}$.
Similar to the case of $\mathbb{E}_4$, 
we can treat $\mathbb{E}_2$ as a $2$-dimensional vector space
over the field $\mathbb{Z}_2$ and name its elements by
the pairs of coordinates in the basis  $(\omega, 1)$ 
(we will use the notations $\widehat x$ and $\widetilde x$ in this case as well).
This gives the second representation of $\KK$ as 
the Cayley graph of $\mathbb{Z}_2^{2+}$ 
with the generating set $\{01,10,11\}$.
At third, $K$ will be considered as 
the Cayley graph of $\mathbb{Z}_4^+$ 
with the generating set $\{1,2,3\}$.

Denote by $\DD{m}{n}$ the Cartesian product  $\Sh^m \times \KK^n$ 
of $m$ copies
of the Shrikhande graph and $n$ copies 
of the full $4$-vertex graph.
If $m>0$, then $\DD{m}{n}$ is called a \emph{Doob graph};
the case $m=0$ corresponds to the Hamming graph $H(n,4)$.
Accordingly with different representations of $\Sh$ and $K$,
we will consider two representations
of the vertex set of $\DD{m}{n}$. 

At first, it is the set of $(m+n)$-tuples
$(x_1,{\ldots},x_m,y_1,{\ldots},y_n)$ from 
$\mathbb{E}_4^m\times\mathbb{E}_2^n$, 
which is a module over the ring $\mathbb{E}_4$
(the addition and the multiplication by a constant from $\mathbb{E}$ is defined coordinatewise, 
modulo $4$ in the first $m$ coordinates and modulo $2$ in the last $n$ coordinates).
We call a code $C \subset \mathbb{E}_4^m\times\mathbb{E}_2^n$
\emph{linear} if it is a submodule, that is, it is closed 
with respect to addition and multiplication by an element of $\mathbb{E}_4$.

At second, we can take the set of $(2m+2n'+n'')$-tuples
$(x_1,{\ldots},x_{2m},y_1,{\ldots},y_{2n'},z_1,{\ldots},z_{n''})$ from
$\mathbb{Z}_4^{2m}\times\mathbb{Z}_2^{2n'}\times\mathbb{Z}_4^{n''}$, 
$n'+n''=n$, as the vertex set of $\DD{m}{n}$.
If a code 
$C \subset \mathbb{Z}_4^{2m}\times\mathbb{Z}_2^{2n'}\times\mathbb{Z}_4^{n''}$
is closed with respect to addition, then we call it \emph{additive}.
An additive code is necessarily closed 
with respect to multiplication by an element of $\mathbb{Z}_4$;
so, it is in fact a submodule of the module
$\mathbb{Z}_4^{2m}\times\mathbb{Z}_2^{2n'}\times\mathbb{Z}_4^{n''}$
over $\mathbb{Z}_4$.

The natural graph distance in $\DD{m}{n}$
provides a metric on $\mathbb{E}_4^m\times\mathbb{E}_2^n$
or 
$\mathbb{Z}_4^{2m}\times\mathbb{Z}_2^{2n'}\times\mathbb{Z}_4^{n''}$,
which will be called the \emph{$\DD{m}{n}$-metric} 
(if $m>0$, a \emph{Doob metric}; if $m=0$, the \emph{Hamming metric}).
The \emph{weight} of a vertex $x$ of $\DD{m}{n}$ 
is the distance from $x$ to $\overline 0$ 
(here and in what follows, $\overline 0$ denotes the zero element of the module,
i.e., the all-zero tuple, whose length is clear from the context).

If we study $1$-perfect codes, the vertices of weight $1$ are of special interest.
Recall that in the case of $\mathbb{E}_4^m\times\mathbb{E}_2^n$,
these vertices are the tuples with only one nonzero element, which belongs to $\SSS$
if it is placed in the $\mathbb{E}_4$-part of the tuple and belongs to $\{1,\omega,\oomega\}$
if its position is in the $\mathbb{E}_2$-part.
In the case of $\mathbb{Z}_4^{2m}\times\mathbb{Z}_2^{2n'}\times\mathbb{Z}_4^{n''}$ with $\DD{m}{n'+n''}$-metric,
every vertex of weight $1$ has one of the forms
$(0{\ldots}0xy0{\ldots}0|\overline 0|\overline 0)$,
$(\overline 0|0{\ldots}0vw0{\ldots}0|\overline 0)$,
$(\overline 0|\overline 0|0{\ldots}0z0{\ldots}0)$, where $x$ and $v$ are in odd positions,
$xy\in\{01,11,10,03,33,30\}$, $vw\in\{01,11,10\}$, $z\in\{1,2,3\}$,
and the vertical lines separate the three parts of the tuple of length $2m$, $2n'$, and $n''$, respectively.

\section{Restrictions on the parameters of additive codes}\label{s:par}
In this section, we derive restrictions 
on the parameters $m$, $n'$, $n''$
of the Doob graph $\DD{m}{n'+n''}$ 
containing an additive $1$-perfect code.
Construction of codes for a wide class (but not for all) 
of parameters satisfying the derived restrictions will be suggested in the next three sections.

\begin{theorem}\label{th:param}
Assume that there is an additive $1$-perfect code in 
$\mathbb{Z}_4^{2m}\times\mathbb{Z}_2^{2n'}\times\mathbb{Z}_4^{n''}$ with 
the Doob $\DD{m}{n'+n''}$-metric.
Then $n''\ne 1$ and for some even 
$\Gamma\ge 0$ and integer $\Delta \ge 2$,
\begin{eqnarray}
2m+n'+n'' &=& (2^{\Gamma+2\Delta}-1)/3, \label{eq:1} \\
3n'+n''&=&2^{\Gamma+\Delta}-1, \label{eq:2} \\
n'' & \le &  2^{\Delta}-1 \label{eq:3}
\end{eqnarray}
\end{theorem}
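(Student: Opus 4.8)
My plan is to work in the quotient group $Q=G/C$, where $G=\mathbb{Z}_4^{2m}\times\mathbb{Z}_2^{2n'}\times\mathbb{Z}_4^{n''}$, and to extract all the relations from the way the weight-$1$ vertices sit inside the torsion structure of $Q$. Since $G$ has exponent $4$, so has the quotient $Q$; hence $Q\cong\mathbb{Z}_2^{\Gamma}\times\mathbb{Z}_4^{\Delta}$ for suitable $\Gamma,\Delta\ge 0$ and $|Q|=2^{\Gamma+2\Delta}$. The first step is the sphere-packing identity: the radius-$1$ balls partition $G$, each has $1+6m+3(n'+n'')$ vertices, so $|Q|=|G|/|C|=1+3(2m+n'+n'')$. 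This gives $3(2m+n'+n'')=2^{\Gamma+2\Delta}-1$, that is, relation (\ref{eq:1}); since the left-hand side is an integer, $2^{\Gamma+2\Delta}\equiv1\pmod3$, so $\Gamma+2\Delta$, and therefore $\Gamma$, is even.

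Next I would use that a $1$-perfect $C$ is exactly one for which the projection $\phi\colon G\to Q$ is injective on the set of weight-$\le 1$ vertices: every weight-$2$ vertex is a difference of two distinct weight-$1$ vertices (for one-position Shrikhande supports, $02=03-01$, and so on), so once the count (\ref{eq:1}) holds, minimum distance at least $3$ is the whole of the perfectness condition. Hence the $6m+3n'+3n''$ weight-$1$ vertices map bijectively onto $Q\setminus\{\overline0\}$, and I would determine the order of each image. At a quaternary coordinate $2\cdot(z{=}1)=(z{=}2)$ is a nonzero weight-$1$ vertex, so it is not in $C$ and $\phi(z{=}1)$ has order $4$; the same holds for $z{=}3$, whereas $z{=}2$ and all binary weight-$1$ vertices $v$ satisfy $2v=\overline0$ and give order-$2$ elements. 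At a Shrikhande coordinate $2w$ has weight $2$, but $w$ and $-w$ are both weight-$1$ and differ by $2w$; were $2w\in C$, these two distinct weight-$1$ vertices would share a coset, contradicting injectivity, so $\phi(w)$ again has order $4$. Counting the order-$2$ images then yields $3n'+n''=2^{\Gamma+\Delta}-1$, which is (\ref{eq:2}).

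For (\ref{eq:3}) I note that each $\phi(z{=}2)=2\phi(z{=}1)$ lies in $2Q\cong\mathbb{Z}_2^{\Delta}$ and that these $n''$ elements are distinct, so $n''\le 2^{\Delta}-1$. To force $\Delta\ge 2$ I use that we are in a genuine Doob graph, $m\ge 1$: fixing one Shrikhande coordinate and writing $a,b,c$ for the (order-$4$) images of $\widehat1,\widehat\omega,\widehat\oomega$, the relation $\widehat1+\widehat\omega+\widehat\oomega=\overline0$ gives $a+b+c=\overline0$, whence $2a,2b,2c$ are nonzero, pairwise distinct, and satisfy $2a+2b=2c$; thus $\{\overline0,2a,2b,2c\}$ is a Klein four-subgroup of $2Q$ and $\Delta=\mathrm{rank}(2Q)\ge 2$.

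The restriction $n''\ne 1$ is the subtle point, and here I would argue by a parity (vector-sum) cancellation. The order-$2$ images partition the $2$-torsion $Q[2]\setminus\{\overline0\}$ into the $n''$ singletons $\{\phi(z{=}2)\}$ and the $n'$ triples $\{p,q,p+q\}$ arising from the binary coordinates, each triple being a two-dimensional $\mathbb{F}_2$-subspace of $Q[2]$ with its zero deleted. Summing over $Q[2]\setminus\{\overline0\}$, every triple contributes $\overline0$, so the total equals the sum of the $n''$ singleton points; but $\dim_{\mathbb{F}_2}Q[2]=\Gamma+\Delta\ge 2$, so the sum over all nonzero elements of $Q[2]$ is $\overline0$. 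Hence the $n''$ quaternary points sum to $\overline0$, and if $n''=1$ that single nonzero point would have to be $\overline0$ --- a contradiction. I expect this last cancellation, together with checking that the order-$2$ images genuinely form a clean partition of $Q[2]\setminus\{\overline0\}$ into subspace-triples and points, to be the part requiring the most care; the order computation via the $\pm$-pairing at Shrikhande coordinates is the other place where the geometry of $\SSS$ enters, and the Klein-four argument for $\Delta\ge 2$ is what uses the hypothesis $m\ge 1$.
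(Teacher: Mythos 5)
Your proof is correct and follows essentially the same route as the paper: pass to the quotient $Q=G/C\cong\mathbb{Z}_2^\Gamma\times\mathbb{Z}_4^\Delta$, identify the nonzero cosets bijectively with the weight-$1$ vertices, count the order-$2$ elements to get (\ref{eq:2}), count the elements of $2Q$ to get (\ref{eq:3}), and rule out $n''=1$ by summing all order-$2$ elements. Your two local variations --- deducing order $4$ at a Shrikhande coordinate from the weight-$1$ pair $w,-w$ differing by $2w$ rather than from the adjacency of $e$ and $2e$, and obtaining $\Delta\ge 2$ from the Klein four-subgroup $\{\overline 0,2a,2b,2c\}$ of $2Q$ rather than from parameter arithmetic --- are both sound, and the latter is arguably cleaner than the paper's terse (and typo-marred) final paragraph.
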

\begin{proof}
Assume $C\subset \mathbb{Z}_4^{2m}\times\mathbb{Z}_2^{2n'}\times\mathbb{Z}_4^{n''}$ 
is an additive $1$-perfect code in $\DD{m}{n}$.
For every weight-$1$ vertex $e$,
the set $[e]=e+C$ is also a $1$-perfect code 
(this follows from the general fact that addition a constant preserves the distance,
which is true for any Cayley graph).
As follows from the definition of $1$-perfect code, 
the set of all such $[e]$, together with $C$ itself,
form a partition of the module; hence, they form 
the factorgroup $(\mathbb{Z}_4^{2m}\times\mathbb{Z}_2^{2n'}\times\mathbb{Z}_4^{n''})^+ / C$.
This group is isomorphic to $(\mathbb{Z}_2^\Gamma \times \mathbb{Z}_4^\Delta)^+$
for some nonnegative integers $\Gamma$ and $\Delta$.
The number of elements of order $2$ in this group is $2^{\Gamma+\Delta}-1$.
On the other hand, the number of order-$2$ tuples of weight $1$
is $3n'+n''$. 
Moreover, if $e$ is an order-$4$ tuple of weight $1$,
then $e+e$ does not coincide with $\overline 0$,
is adjacent to $e$, and thus cannot belong to $C$, 
which means that $[e]$ has order $4$ in the factorgroup as well.
So,  $3n'+n''$ is also the number of elements of order $2$ in the factorgroup, and (\ref{eq:2}) holds. 
Additionally,
as the order of the factorgroup
coincides with the number of weight-$1$ vertices plus one,
we get $2^{\Gamma+2\Delta}=6m+3(n'+n'')+1$, i.e. (\ref{eq:1});
we also note that this equation has integer solutions only for even $\Gamma$.
To prove the inequality (\ref{eq:3}), we note that $n''$ weight-$1$ vertices
have the form $2e$ for some $e$; hence, the same is true for the corresponding 
cosets. But the number of such nonzero elements in the factorgroup is $2^{\Delta}-1$;
so, $n''$ cannot exceed this value.

It remains to prove that $n'' \ne 1$. Assume the contrary, $n'' = 1$.
Consider the set of all $2^{\Gamma+\Delta}-1$ order-$2$ elements of the factorgroup.
It is partitioned into $n'$ triples of elements $[e_{2m+2i-1}]$, $[e_{2m+2i}]$, $[e_{2m+2i-1}+e_{2m+2i}]$, 
$i=1,{\ldots},n'$,
and one additional element $[2e_{2m+2n'+1}]$,
where $e_{j}$ is the tuple 
with one in the $j$th position 
and zeros in the others.
We see that the sum of all order-$2$ elements is $[2e_{2m+2n'+1}]$,
i.e., non-zero, which is obviously impossible if 
$\Gamma+\Delta>1$. The case $\Gamma+\Delta=1$ is degenerate and yields 
$m=0$, which is not allowed by the definition of a Doob graph.

Finally, we note that 
$\Delta=0$
implies $m=0$, which is not allowed by the definition of a Doob graph,
and $\Delta=0$ implies $n''=1$ which is proven to be impossible.
\end{proof}
\begin{corollary}\label{c:param}
Assume that there is a linear $1$-perfect code in 
$\mathbb{E}_4^{m}\times\mathbb{E}_2^{n}$ or an additive $1$-perfect code in 
$\mathbb{Z}_4^{2m}\times\mathbb{Z}_2^{2n}$ with 
the $\DD{m}{n}$-metric.
Then
 for some integers 
$\gamma\ge 0$ and $\delta>0$,
$$n=(4^{\gamma+\delta}-1)/3 \qquad \mbox{and} \qquad
m=(4^{\gamma+2\delta}-4^{\gamma+\delta})/6.$$
\end{corollary}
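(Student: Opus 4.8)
The plan is to derive Corollary~\ref{c:param} directly from Theorem~\ref{th:param} by specializing to the case $n''=0$. The corollary concerns codes in $\mathbb{E}_4^m\times\mathbb{E}_2^n$ (linear) or $\mathbb{Z}_4^{2m}\times\mathbb{Z}_2^{2n}$ (additive); in both descriptions the $\mathbb{Z}_4$-part of length $n''$ is absent, so setting $n''=0$ and $n'=n$ in Theorem~\ref{th:param} should be the whole content. First I would observe that a linear code over $\mathbb{E}_4$ is in particular additive over $\mathbb{Z}_4$ (as noted in Section~\ref{s:shrik}, $\mathbb{E}_4$ contains $\mathbb{Z}_4$ as a subring, so closure under $\mathbb{E}_4$-multiplication implies closure under $\mathbb{Z}_4$-multiplication), and an additive code in $\mathbb{Z}_4^{2m}\times\mathbb{Z}_2^{2n}$ is exactly the $n''=0$ instance of the setting of Theorem~\ref{th:param}. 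Hence both hypotheses reduce to the existence of an additive $1$-perfect code with $n''=0$.

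With $n''=0$, the three relations of Theorem~\ref{th:param} become
\begin{eqnarray*}
2m+n &=& (2^{\Gamma+2\Delta}-1)/3, \\
3n &=& 2^{\Gamma+\Delta}-1,
\end{eqnarray*}
while inequality~(\ref{eq:3}) and the condition $n''\ne 1$ are automatically satisfied. The key arithmetic step is to rewrite these using $\Gamma$ even, say $\Gamma=2\gamma$, and to set $\delta=\Delta$. Then $\Gamma+\Delta=2\gamma+\Delta$, and the second relation gives $3n=2^{2\gamma+\Delta}-1$, which I would like to read as $n=(4^{\gamma+\delta}-1)/3$. The obstacle here is that $2^{2\gamma+\Delta}$ is a power of $4$ only when $\Delta$ is even; in general $2^{2\gamma+\Delta}=4^{\gamma}\cdot 2^{\Delta}$, which is a power of $4$ iff $\Delta$ is even. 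So the real work is to show that in this specialized situation $\Delta$ must in fact be even, so that $\gamma+\delta$ with $\delta=\Delta/2$ makes sense — or, more carefully, to identify the correct matching of the exponents $(\gamma,\delta)$ of the corollary with $(\Gamma,\Delta)$ of the theorem.

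I expect the parity of $\Delta$ to be the main issue, and I would handle it by looking at the first relation modulo small numbers. From $3n=2^{\Gamma+\Delta}-1$ and $2m+n=(2^{\Gamma+2\Delta}-1)/3$ one gets $6m=2^{\Gamma+2\Delta}-1-3n=2^{\Gamma+2\Delta}-2^{\Gamma+\Delta}=2^{\Gamma+\Delta}(2^\Delta-1)$, so $m=2^{\Gamma+\Delta-1}(2^\Delta-1)/3$. For $m$ to be a nonnegative integer we need $3\mid (2^\Delta-1)$, i.e. $\Delta$ even; writing $\Delta=2\delta$ and $\Gamma=2\gamma$ then yields $2^{\Gamma+\Delta}=4^{\gamma+\delta}$ and $2^{\Gamma+2\Delta}=4^{\gamma+2\delta}$. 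Substituting back gives precisely $n=(4^{\gamma+\delta}-1)/3$ and $m=(4^{\gamma+2\delta}-4^{\gamma+\delta})/6$, with $\delta=\Delta/2>0$ since $\Delta\ge 2$, and $\gamma=\Gamma/2\ge 0$. Thus the divisibility constraint forcing $\Delta$ even is the crux, and everything else is substitution.
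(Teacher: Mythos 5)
Your proposal is correct and follows essentially the same route as the paper: specialize Theorem~\ref{th:param} to $n'=n$, $n''=0$, solve for $m$ and $n$, and use the integrality of $m$ (and $n$) to force $\Delta$ (and $\Gamma$) to be even, then set $\gamma=\Gamma/2$, $\delta=\Delta/2$. Your treatment is slightly more detailed than the paper's one-line integrality remark (you make the divisibility $3\mid(2^\Delta-1)$ explicit and also note that linearity over $\mathbb{E}_4$ implies additivity over $\mathbb{Z}_4$), but the argument is the same.
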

\begin{proof}
In the case $n'=n$, $n''=0$, 
the solution of the equations from the statement of Theorem~\ref{th:param} is
$n=(2^{\Gamma+\Delta}-1)/3$, 
$m=(2^{\Gamma+2\Delta}-2^{\Gamma+\Delta})/6$. Since $m$ and $n$ are integers
only if both $\Gamma$ and $\Delta$ are even, we get the statement with $\gamma=\Gamma/2$ and
$\delta=\Delta/2$.
\end{proof}

\begin{remark}\label{rem:m0}
Although we formally require that $m>0$ for Doob graphs, the arguments in this section
still work for the case $m=0$. As a result, from (\ref{eq:1})--(\ref{eq:3}) we can see
that nontrivial additive $1$-perfect codes in $\mathbb{Z}_2^{2n'}\times \mathbb{Z}_4^{n''}$ with 
the Hamming $\DD{0}{n'+n''}$-metric can only exist when $n''=0$.
The results in the next section (see also Corollary~\ref{cor:add0-constr}) 
can also be applied to the degenerated case $m=\delta=0$,
providing a construction of such codes, 
which are well known Hamming $4$-ary codes.
\end{remark}
\section{Construction of linear codes}\label{s:constr}
Let $\gamma \ge 0$ and $\delta>0$ be integers. 
Consider two matrices 
$A^*=A^*_{\gamma,\delta}$ and $A'=A'_{\gamma,\delta}$.
The matrix $A^*$ consists of all columns from $\mathbb{E}_4^{\gamma+\delta}$ satisfying the following:

(*) the order of the column is $4$;

(**) the first regular (order-$4$) element of the column is either $1$ or $\psi=2+\omega$;

(***) the last $\gamma$ elements of the column are zero divisors.

The number of such columns is $(16^{\delta}4^{\gamma}-4^{\delta+\gamma})/6$, which will be denoted by $m$;
so, $A^*$ is a $({\gamma+\delta}) \times m$ 
matrix over $\mathbb{E}_4$.

The matrix $A'$ 
consists of all $n=(4^{\delta+\gamma}-1)/3$ 
nonzero columns from $\mathbb{E}_2^{\gamma+\delta}$ whose first nonzero element is $1$.

We now merge the matrices $A^*$ and $A'$ into the matrix $A=A_{\gamma,\delta}=A^*|A'$ of size $({\gamma+\delta}) \times (m+n)$
and define the multiplication $A z^{\TR}$ for $z = (x|y)\in \mathbb{E}_4^m\times \mathbb{E}_2^n$ as
$A^* x^{\TR}+2A' y^{\TR}$ (here, the result of the multiplication by $2$ is considered as a column-vector over $\mathbb{E}_4$). For example, 
\begin{eqnarray*}
A_{0,2}=\left(\begin{array}{ccccccccccccccccccccccccccc|ccccc}
  0&0&2&2&2\omega&2\omega&2\oomega&2\oomega&
 1&1&1&1&1&1&1&1&1&1
\\
1&\psi&1&\psi&1&\psi&1&\psi&
0&2&2\omega&2\oomega&
1&-\oomega&\omega&-1&\oomega&-\omega
\\ \hline\end{array}\right.\ \ 
\\ 
\left.\begin{array}{ccccccccc|ccccc}
  1&1&1&1&1&1&
\psi&\ldots&\psi& 
 0&1&1&1&1
 \\
 2{+}\omega&2\omega{+}1&2\omega{+}\oomega& 
2\oomega{+}\omega& 2\oomega{+}1& 2{+}\oomega& 0 &\ldots&2{+}\oomega& 1&0&1&\omega&\oomega
\\ \hline\end{array}\right),
\end{eqnarray*}
$$
A_{1,1}=\left(\begin{array}{cccccccc|ccccc}
1&1&1&1&
 \psi&\psi&\psi&\psi& 
 0&1&1&1&1  
 \\ \hline
 0&2&2\omega&2\oomega&0&2&2\omega&2\oomega&
1&0&1&\omega&\oomega
\end{array}\right),
\qquad A_{0,1}=
\left(\begin{array}{cc|c}
1&\psi&1 \\ \hline
\end{array}\right).
$$

\begin{theorem}\label{th:lin-constr}
Let the matrix $A=A_{\gamma,\delta}$ be constructed as above.
The set $C=C_{\gamma,\delta} = \{c\in \mathbb{E}_4^m\times \mathbb{E}_2^n \,:\, Ac^{\TR} = \overline 0^{\TR}\}$
is a linear $1$-perfect code in the Doob graph $\DD{m}{n}$.
\end{theorem}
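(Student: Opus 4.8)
The plan is to use the standard syndrome characterization of $1$-perfect linear codes. Writing $\phi(z)=Az^\TR$ for the syndrome map, it suffices to show that the restriction of $\phi$ to the radius-$1$ ball $B=\{z : \mathrm{wt}(z)\le 1\}$ is a bijection onto the image $\phi(\mathbb{E}_4^m\times\mathbb{E}_2^n)$. Indeed, surjectivity onto the image gives the covering property, since every coset of $C$ is determined by its syndrome and would then contain a weight-$\le1$ vector; and injectivity of $\phi|_B$ gives minimum distance $\ge 3$ (the packing property). For the latter I would use that every Shrikhande element of weight $2$ is a sum $s_1+s_2$ of two elements of $\SSS$ with $s_1\ne -s_2$: a weight-$2$ codeword, whether supported on two coordinates or on a single Shrikhande coordinate, then forces two distinct weight-$1$ vectors to share a syndrome, contradicting injectivity. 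So the whole theorem reduces to the claim that $\phi|_B$ is a bijection.

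First I would read off, from the description of weight-$1$ vertices at the end of Section~\ref{s:shrik}, the exact list of weight-$1$ syndromes. A weight-$1$ vector supported in the $\mathbb{E}_4$-part contributes a syndrome $sa$ with $s\in\SSS$ and $a$ a column of $A^*$; one supported in the $\mathbb{E}_2$-part contributes $2ta'$ with $t\in\{1,\omega,\oomega\}$ and $a'$ a column of $A'$. I would then identify these two families. Since an element of $\mathbb{E}_4$ of additive order $4$ is exactly a unit, $\SSS$ acts freely on the order-$4$ columns; the position of the topmost unit entry is invariant under this action; and by Lemma~\ref{l:4cosets} the two cosets of $\SSS$ in the unit group are represented by $1$ and $\psi$, so normalization $(**)$ selects exactly one representative per orbit. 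Hence $\{sa : s\in\SSS,\ a \text{ a column of } A^*\}$ runs without repetition over all order-$4$ columns whose last $\gamma$ entries are zero divisors, a set of size $6m=4^{\gamma+2\delta}-4^{\gamma+\delta}$. Likewise, over $\mathbb{E}_2=\mathrm{GF}(4)$ the group $\{1,\omega,\oomega\}$ acts freely on nonzero columns and ``first nonzero entry $=1$'' picks one per orbit, so $\{2ta' : t\in\{1,\omega,\oomega\},\ a' \text{ a column of } A'\}$ runs without repetition over all $4^{\gamma+\delta}-1=3n$ nonzero elements of $(2\mathbb{E}_4)^{\gamma+\delta}$.

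The key step is to assemble these into the bijection. The first family consists of order-$4$ columns and the second of order-$2$ columns, so the two families are disjoint and both avoid $\overline 0$; together with $\overline 0$ they give $1+6m+3n=4^{\gamma+2\delta}$ distinct syndromes, matching $|B|$ exactly. Moreover their union is precisely the submodule $T=\mathbb{E}_4^{\delta}\times(2\mathbb{E}_4)^{\gamma}$: the order-$2$ columns fill $(2\mathbb{E}_4)^{\gamma+\delta}$, and adjoining the order-$4$ columns with zero-divisor tail removes the restriction on the first $\delta$ coordinates while keeping the last $\gamma$ inside $2\mathbb{E}_4$, so that $|T|=4^{\gamma+2\delta}$. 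Since every column of $A^*$ and of $2A'$ already lies in $T$ and $T$ is a submodule, $\phi(\mathbb{E}_4^m\times\mathbb{E}_2^n)\subseteq T$; combined with the count this forces $\phi(\mathbb{E}_4^m\times\mathbb{E}_2^n)=T$ and shows $\phi|_B$ is a bijection onto the image.

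I expect the main obstacle to be the orbit bookkeeping in the middle paragraph: verifying that $(**)$ and ``first nonzero $=1$'' really yield one representative per orbit with no collisions between distinct columns, which rests on the freeness of the two unit-group actions and on Lemma~\ref{l:4cosets} identifying units, zero divisors, and the cosets $\SSS$ and $\psi\SSS$. Once distinctness and the total count $4^{\gamma+2\delta}$ are established, the packing argument via $s_1+s_2$ and the covering argument via surjectivity are short, and the identity $1+6m+3n=4^{\gamma+2\delta}$ is a direct substitution of the formulas for $m$ and $n$.
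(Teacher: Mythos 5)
Your proposal is correct and follows essentially the same route as the paper: both arguments rest on the normalizations $(**)$ and ``first nonzero entry $=1$'' selecting one representative per multiplicative orbit, so that the weight-$1$ syndromes exhaust the syndrome space $\mathbb{E}_4^{\delta}\times(2\mathbb{E}_4)^{\gamma}$, with uniqueness supplied by the count $1+6m+3n=4^{\gamma+2\delta}$. The paper phrases this as ``every nonzero syndrome is covered by a unique coordinate'' (splitting into the order-$2$ and order-$4$ cases exactly as you do), while you phrase it as a bijection from the radius-$1$ ball onto the image of the syndrome map; your separate minimum-distance argument via weight-$2$ decompositions is sound but redundant given that bijection.
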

\begin{proof}
For a tuple $z\in \mathbb{E}_4^m\times \mathbb{E}_2^n$,
the value $Az^{\TR}$ is called a syndrome of $z$.
Note that the last $\gamma$ elements of every syndrome are
divisors of zero; so, there are at most $16^\delta 4^\gamma$
different syndromes.
Let us consider an arbitrary 
$z\in \mathbb{E}_4^m\times \mathbb{E}_2^n$ 
and its syndrome $s=Az^{\TR}$. 
If $s$ is the all-zero column, 
then $z\in C$. 
Let us show that if $s$ is non-zero,
then there is a unique codeword $c=z-e$ adjacent to $z$.
For the existence, it is sufficient to find a weight-$1$ tuple $e$ 
with syndrome $s$. 
We will say that $s$ is \emph{covered} by the coordinate $i$
if it is the syndrome of some $e$ of weight $1$ 
with the only non-zero value in the position $i$.
Let us consider two cases.

(i) If $s$ is of order $2$, then, 
by the definition of $A'$ and Lemma~\ref{l:4cosets}, 
$s$ is representable as $2\alpha a$ for some column $a$ of $A'$, where
$\alpha$ from $\{1,\omega, \oomega\}$ is the first non-zero element of $s$.
Then $s$ is covered by the corresponding coordinate.

(ii) If $s$ is of order $4$, then, 
by the definition of $A^*$ and Lemma~\ref{l:4cosets}, 
$s$ is representable as $\beta b$ for the column $b=s/\beta$ of $A^*$, where
$\beta\in\SSS$ and the first regular element of $s$ is $\beta$ or $\psi\beta$.
Then, again, $s$ is covered by the corresponding coordinate.

It is easy to see also that the choice of $e$ is unique 
(which also follows from numerical reasons: the number of weight-$1$ tuples coincide
with the number of possible syndromes).
Then, $C$ is a $1$-perfect code by the definition.
\end{proof}
The matrix $A$, defining the code $C$ as the kernel of the corresponding homomorphism, is known as a \emph{check matrix} of $C$.

\section{Construction of additive codes, even $\Delta$}\label{s:add}
The linear codes constructed in the previous section are trivially 
additive codes in $\mathbb{Z}_4^{2m}\times\mathbb{Z}_2^{2n}$,
if we treat the elements of $\mathbb{E}_4$ and $\mathbb{E}_2$
as vectors over $\mathbb{Z}_4$ and $\mathbb{Z}_2$, respectively.

\begin{corollary}\label{cor:add0-constr}
Let the matrix $B$ be obtained from the matrix $A$ constructed in Section~\ref{s:constr} by replacing 
every item $x$ by the $2\times 2$ matrix $\widetilde x$, over $\mathbb{Z}_4$ or $\mathbb{Z}_2$.
The set $\widehat C = \{c\in \mathbb{Z}_4^{2m}\times \mathbb{Z}_2^{2n} \,:\, Bc^{\TR} = \overline 0^{\TR}\}$, where
$B(x|y)^\TR=B^*x^{\TR}+2B'y^\TR$,
is an additive $1$-perfect code in the Doob graph $\DD{m}{n}$.
\end{corollary}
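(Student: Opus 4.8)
The plan is to show that the additive code $\widehat C$ obtained by ``unfolding'' the linear code $C$ of Theorem~\ref{th:lin-constr} is itself $1$-perfect, essentially by transporting the syndrome-covering argument from $\mathbb{E}_4^m\times\mathbb{E}_2^n$ to $\mathbb{Z}_4^{2m}\times\mathbb{Z}_2^{2n}$ through the coordinate isomorphism $x\mapsto\widehat x$. First I would make precise the dictionary between the two representations: the map sending each element $x\in\mathbb{E}_4$ (resp.\ $\mathbb{E}_2$) to its coordinate pair $\widehat x\in\mathbb{Z}_4^2$ (resp.\ $\mathbb{Z}_2^2$) in the basis $(\omega,1)$ is an additive group isomorphism, and by the very definition of $\widetilde x$ it intertwines multiplication by $x$ with the linear map $\widehat y^\TR\mapsto\widetilde x\widehat y^\TR$. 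Consequently, if $\Phi$ denotes the coordinatewise unfolding $\mathbb{E}_4^m\times\mathbb{E}_2^n\to\mathbb{Z}_4^{2m}\times\mathbb{Z}_2^{2n}$, then for every tuple $z$ we have the identification $B\,\Phi(z)^\TR = \widehat{A z^\TR}$; that is, computing the $\mathbb{Z}_4$/$\mathbb{Z}_2$-syndrome of $\Phi(z)$ gives exactly the coordinate vector of the $\mathbb{E}_4$-syndrome $Az^\TR$.

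Granting this identity, the rest follows almost formally. Since $\Phi$ is a bijection preserving the $\DD{m}{n}$-metric (it is just a renaming of the vertices of the same Cayley graph, as set up in Section~\ref{s:shrik}), and since $Az^\TR=\overline 0$ if and only if $\widehat{Az^\TR}=\overline 0$ if and only if $B\,\Phi(z)^\TR=\overline 0$, we get $\widehat C=\Phi(C)$. A bijective isometry carries a $1$-perfect code to a $1$-perfect code, so $\widehat C$ is $1$-perfect because $C$ is, by Theorem~\ref{th:lin-constr}. The closure of $\widehat C$ under addition (hence its additivity in the sense of Section~\ref{s:shrik}) is immediate from the linearity of $w\mapsto Bw^\TR$ over $\mathbb{Z}_4$ and $\mathbb{Z}_2$.

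The one point that genuinely needs care, and which I expect to be the main obstacle, is verifying the intertwining identity $B\,\Phi(z)^\TR=\widehat{Az^\TR}$ cleanly across the \emph{mixed} structure of the ambient module. The matrix $A=A^*|A'$ acts by $A z^\TR=A^*x^\TR+2A'y^\TR$, and the factor of $2$ together with the reduction $\mathbb{E}_2=\mathbb{E}_4/2\mathbb{E}_4$ means the $\mathbb{E}_2$-block must be handled with attention to which residues one is reducing modulo: one should check that replacing an $\mathbb{E}_2$-entry $x$ by the $2\times 2$ matrix $\widetilde x$ \emph{over $\mathbb{Z}_2$} is compatible with the column-vector-over-$\mathbb{E}_4$ interpretation of the $2A'y^\TR$ term. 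Concretely, I would verify that $\widetilde{(\cdot)}$ is a ring homomorphism on $\mathbb{E}_4$ and reduces correctly modulo $2$, so that $B^*x^\TR$ unfolds $A^*x^\TR$ and $2B'y^\TR$ unfolds $2A'y^\TR$ block by block; once this bookkeeping is settled the identity is a coordinatewise instance of $z=xy\iff\widehat z^\TR=\widetilde x\widehat y^\TR$, and the corollary reduces, as stated, to the trivial observation opening this section.
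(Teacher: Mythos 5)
Your proposal is correct and follows exactly the route the paper intends: the paper offers no explicit proof, asserting just before the corollary that the linear codes are ``trivially'' additive under the coordinate unfolding $x\mapsto\widehat x$, and your argument is a careful spelling-out of that same identification, including the intertwining identity $B\,\Phi(z)^\TR=\widehat{Az^\TR}$ and the compatibility of the factor $2$ with the $\mathbb{E}_2$-block. Nothing in your write-up deviates from or adds to the paper's approach beyond making the bookkeeping explicit.
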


To construct additive codes in 
$\mathbb{Z}_4^{2m}\times\mathbb{Z}_2^{2n'}\times\mathbb{Z}_4^{n''}$
with $n''>0$, we will start from the check matrix $B$ of the code $\widehat C$, remove 
some columns from the first $\mathbb{Z}_4$- and second $\mathbb{Z}_2$- parts of the matrix and add
columns to the new, third, $\mathbb{Z}_4$-part of the matrix.

Let the matrix $B = B^*|B'$ be constructed from the matrix $A=A^*|A'$ as in Corollary~\ref{cor:add0-constr}.
Let $\lambda_1^\TR$, {\ldots}, $\lambda_{n''/3}^\TR$ be some columns of $A'$ having zeros in the last $\gamma$ positions (by (\ref{eq:3}), there are at least $n''/3$ such columns, while by (\ref{eq:2}) this number is integer%
).
Note that $A^*$ also has the same columns, but treated as vectors over $\mathbb{E}_4$.
Let the matrices $D^*$ and $D'$ be obtained from $B^*$ and $B'$, respectively,
by removing the corresponding $2n''/3$ columns. 
And let $D''$ be the matrix with the columns 
$\widehat{\lambda}_1^\TR$, $\widehat{\omega\lambda}_1^\TR$, $\widehat{\oomega\lambda}_1^\TR$,
{\ldots}, $\widehat{\lambda}_{n''/3}^\TR$, $\widehat{\omega\lambda}_{n''/3}^\TR$, $\widehat{\oomega\lambda}_{n''/3}^\TR$.
Denote $D=D^*|D'|D''$. The following example illustrates the transformation $A \to B \to D$ ($\gamma=0$, $\delta=2$).
$$
\left(\begin{array}{@{\ .\,.\ }c@{\ .\,.\ }|@{\ .\,.\ }c@{\ .\,.\ }}
1&1 \\
\oomega & \oomega
\end{array}\right) \quad\longrightarrow\quad
\left(\begin{array}{@{\ .\,.\ }c@{\,}c@{\ .\,.\ }|@{\ .\,.\ }c@{\,}c@{\ .\,.\ }}
1&0&1&0 \\[-0.3ex]
0&1&0&1 \\[0.3ex]
0&3&0&1 \\[-0.3ex]
1&3&1&1
\end{array}\right) \quad\longrightarrow\quad
\left(\begin{array}{@{\ .\,.\ }|@{\ .\,.\ }|@{\ .\,.\ }c@{\ }c@{\ }c@{\ .\,.\ }}
0&1&3 \\[-0.3ex]
1&0&3 \\[0.3ex]
3&0&1 \\[-0.3ex]
3&1&0
\end{array}\right)
$$
\begin{theorem}\label{th:add-constr}
Let the matrix $D=D^*|D'|D''$ be defined as above.
Then the set 
$\overline C = \{c\in \mathbb{Z}_4^{2m^*}
\times \mathbb{Z}_2^{2n'}
\times \mathbb{Z}_4^{n''}
\,:\, Dc^{\TR} = \overline 0^{\TR}\}$, 
where $D(x|y|z)^\TR = D^*x^{\TR}+2D'y^\TR+D''z^\TR $,
is an additive $1$-perfect code in $\DD{m^*}{n'+n''}$.
\end{theorem}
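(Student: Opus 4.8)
The plan is to repeat the syndrome-covering method from the proof of Theorem~\ref{th:lin-constr} and Corollary~\ref{cor:add0-constr}. Since $D$ is a matrix over $\mathbb{Z}_4$, the set $\overline C$ is automatically a submodule and hence additive, so only $1$-perfectness requires proof. Calling $Dz^\TR$ the syndrome of $z$, it suffices to show that every admissible syndrome is the syndrome of exactly one vertex of weight at most $1$; then the radius-$1$ balls centered at the codewords partition the space. I would identify $\mathbb{Z}_4^{2(\gamma+\delta)}$ with $\mathbb{E}_4^{\gamma+\delta}$ via $\widehat{\ }$, so that syndromes can be compared in the $\mathbb{E}_4$-picture, and the whole argument reduces to comparing the syndromes covered by weight-$1$ vertices for $D$ with those for $B$, which is already known to define the $1$-perfect code $\widehat C$.

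First I would fix parameters. Removing $2n''/3$ columns from $B^*$ and $2n''/3$ columns from $B'$ gives $m^*=m-n''/3$ and $n'=n-n''/3$, while $D''$ contributes $n''$ columns; a one-line computation shows $6m^*+3n'+3n''+1=6m+3n+1=16^\delta 4^\gamma$, so the number of weight-at-most-$1$ vertices of $\DD{m^*}{n'+n''}$ equals the cardinality of the syndrome space. That space is unchanged, because every column of $D$ still has its last $\gamma$ $\mathbb{E}_4$-blocks equal to zero divisors: the $D''$-columns $\widehat\lambda_i$, $\widehat{\omega\lambda}_i$, $\widehat{\oomega\lambda}_i$ vanish there, since each $\lambda_i$ was chosen to vanish in the last $\gamma$ positions. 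Consequently it is enough to verify the covering property, and injectivity then follows by counting. I would also record here that each chosen $\lambda_i$ is simultaneously a column of $A'$ and, after lifting its entries $0,1,\omega,\oomega$ to $\mathbb{E}_4$, a column of $A^*$, so the removed coordinates are exactly those whose covered syndromes I describe next.

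The heart of the proof is a local bookkeeping of syndromes, one index $i$ at a time. In $\widehat C$ the removed Shrikhande coordinate carrying the column $\lambda_i$ of $B^*$ covers exactly the six order-$4$ syndromes $\SSS\lambda_i$ (case (ii) of Theorem~\ref{th:lin-constr}), and the removed $\mathbb{Z}_2^2$-coordinate carrying $\lambda_i$ in $B'$ covers exactly the three order-$2$ syndromes $2\SSS\lambda_i=\{2\lambda_i,2\omega\lambda_i,2\oomega\lambda_i\}$ (case (i)). By $1$-perfectness of $\widehat C$ these nine syndromes are covered \emph{only} by these two coordinates, so deleting them destroys the coverage of $\SSS\lambda_i\cup 2\SSS\lambda_i$ and of nothing else. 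It then remains to check that the three new $\mathbb{Z}_4$-coordinates recover exactly these nine syndromes, each once: a weight-$1$ vertex in such a coordinate takes a value $z\in\{1,2,3\}$ and produces the $\mathbb{E}_4$-syndrome $z\omega^j\lambda_i$, so, using $\{1,3\}=\{1,-1\}$, the column $\widehat\lambda_i$ covers $\{\lambda_i,-\lambda_i,2\lambda_i\}$, the column $\widehat{\omega\lambda}_i$ covers $\{\omega\lambda_i,-\omega\lambda_i,2\omega\lambda_i\}$, and $\widehat{\oomega\lambda}_i$ covers $\{\oomega\lambda_i,-\oomega\lambda_i,2\oomega\lambda_i\}$. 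As $\SSS=\{\pm1,\pm\omega,\pm\oomega\}$, the union of the order-$4$ parts is $\SSS\lambda_i$ and the order-$2$ parts give $2\SSS\lambda_i$, every syndrome appearing exactly once.

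Summing over $i=1,\ldots,n''/3$ and adding the coverage of the untouched columns of $D^*$ and $D'$, which is identical to their coverage in $B$, every nonzero admissible syndrome is covered by exactly one weight-$1$ vertex and $\overline 0$ only by $\overline 0$; hence $\overline C$ is $1$-perfect in $\DD{m^*}{n'+n''}$. The main obstacle, and the only genuinely computational step, is the exact-once recovery of $\SSS\lambda_i\cup 2\SSS\lambda_i$ by the three $D''$-columns in the previous paragraph; everything else is parameter bookkeeping together with an appeal to the already-established perfectness of $\widehat C$ from Corollary~\ref{cor:add0-constr}.
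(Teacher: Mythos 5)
Your proposal is correct and follows essentially the same route as the paper's proof: a local, per-index comparison of the syndromes covered by the two removed coordinates (the six order-$4$ syndromes $\SSS\lambda_i$ and the three order-$2$ syndromes $2\SSS\lambda_i$) with those covered by the three new $\mathbb{Z}_4$-coordinates, concluding that the covered set is unchanged and invoking the counting identity $6m^*+3n'+3n''+1=16^\delta 4^\gamma$ for uniqueness. Your version is slightly more explicit about the parameter bookkeeping and about why uniqueness of coverage transfers from $\widehat C$, but the underlying argument is identical.
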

\begin{proof}
As in the proof of Theorem~\ref{th:lin-constr}, 
given a check matrix, we will say
that some syndrome $s$ is covered by some coordinates if there is a weight-$1$ tuple $e$
with zeros out of these coordinates and with the syndrome $s$.

We first consider the check matrix $A=A^*|A'$.
Consider a column 
$\lambda_i^\TR$ of $A'$ having zeros in the last $\gamma$ positions.
The corresponding coordinate covers three syndromes, 
$2\lambda_i^\TR$, $2\omega\lambda_i^\TR$, and $2\oomega\lambda_i^\TR$.
Hence, the corresponding two columns of the matrix $B$ cover the three syndromes
$2\widehat{\lambda_i}^\TR$, 
$2\widehat{\omega\lambda_i}^\TR$, 
$2\widehat{\oomega\lambda_i}^\TR$.
Next, consider the column $\lambda_i^\TR$ of $A^*$. 
The corresponding coordinate covers six syndromes, 
$\lambda_i^\TR$, 
$\omega\lambda_i^\TR$, 
$\oomega\lambda_i^\TR$,
$3\lambda_i^\TR$, 
$3\omega\lambda_i^\TR$, 
and $3\oomega\lambda_i^\TR$.
Hence, the corresponding two columns of the matrix $B$ cover the six syndromes
$\widehat{\lambda_i}^\TR$, 
$\widehat{\omega\lambda_i}^\TR$, 
$\widehat{\oomega\lambda_i}^\TR$,
$3\widehat{\lambda_i}^\TR$, 
$3\widehat{\omega\lambda_i}^\TR$, 
$3\widehat{\oomega\lambda_i}^\TR$.

Now consider the matrix $D=D^*|D'|D''$. The coordinate, corresponding to the column 
$\widehat{\lambda_i}^\TR$ of $D''$, covers the three syndromes
$\widehat{\lambda_i}^\TR$, $2\widehat{\lambda_i}^\TR$, and $3\widehat{\lambda_i}^\TR$.
The coordinates, corresponding to the columns
$\widehat{\omega\lambda_i}^\TR$ and $\widehat{\oomega\lambda_i}^\TR$ of $D''$, covers the syndromes
$\widehat{\omega\lambda_i}^\TR$, $2\widehat{\omega\lambda_i}^\TR$, $3\widehat{\omega\lambda_i}^\TR$ 
and
$\widehat{\oomega\lambda_i}^\TR$, $2\widehat{\oomega\lambda_i}^\TR$, $3\widehat{\oomega\lambda_i}^\TR$,
respectively.

We see that after removing the four columns from the matrix $B$ and adding the three columns to the new,
third part of the check matrix, the set of covered syndromes has not been changed. 
As it is true for every $i$ from $1$ to $n''$, with the check matrix $D$, every syndrome is covered.
Moreover, by the numerical reasons, every nonzero syndrome is the syndrome of a unique weight-$1$ vertex.
This proves that the code is $1$-perfect.
\end{proof}
\begin{corollary}\label{cor:add-exist}
For every $m$, $n'$, and $n''$ satisfying the statement of Theorem~\ref{th:param} with even $\Delta$,
there is a $1$-perfect code in 
$\mathbb{Z}_4^{2m}\times \mathbb{Z}_2^{2n'}\times \mathbb{Z}_4^{2n''}$ with $\DD{m}{n'+n''}$-metric.
\end{corollary}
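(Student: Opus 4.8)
The plan is to obtain the claimed code by specializing the construction of Theorem~\ref{th:add-constr}, so the proof is essentially a bookkeeping argument matching the free parameters of that construction to the constraints of Theorem~\ref{th:param}. Fix a triple $(m,n',n'')$ satisfying (\ref{eq:1})--(\ref{eq:3}) with even $\Delta$; recall that $\Gamma$ is automatically even by Theorem~\ref{th:param}. First I would set $\gamma=\Gamma/2\ge 0$ and $\delta=\Delta/2\ge 1$, which are exactly the integer parameters admitted in Section~\ref{s:constr}, and form the linear code $C_{\gamma,\delta}$ together with its additive incarnation $\widehat C$ from Corollary~\ref{cor:add0-constr}, living in $\mathbb{Z}_4^{2m_0}\times\mathbb{Z}_2^{2n_0}$ with $m_0=(4^{\gamma+2\delta}-4^{\gamma+\delta})/6$ and $n_0=(4^{\gamma+\delta}-1)/3$. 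This is the ``ambient'' code from which columns will be moved.

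Next I would check that the transformation $B\to D$ of Theorem~\ref{th:add-constr} can actually be performed for the prescribed $n''$. Two arithmetic facts are needed. First, $n''$ must be divisible by $3$: from (\ref{eq:2}) we have $3n'+n''=2^{\Gamma+\Delta}-1=4^{\gamma+\delta}-1$, and since $4\equiv 1 \pmod 3$ the right-hand side is divisible by $3$, whence $n''\equiv 0 \pmod 3$. Second, there must be at least $n''/3$ columns of $A'$ with zeros in the last $\gamma$ positions; such columns are the nonzero vectors supported on the initial $\delta$-block with leading entry $1$, of which there are $(4^\delta-1)/3$, and (\ref{eq:3}) gives $n''/3\le(2^\Delta-1)/3=(4^\delta-1)/3$, so enough are available. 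Choosing $\lambda_1,\dots,\lambda_{n''/3}$ among them, I would build $D=D^*|D'|D''$ exactly as in Section~\ref{s:add} and invoke Theorem~\ref{th:add-constr} to conclude that $\overline C$ is an additive $1$-perfect code.

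It then remains to verify that the resulting code sits in the right graph, i.e.\ that its parameters coincide with the prescribed $(m,n',n'')$. Here the column count does the work: passing from $B$ to $D$ removes $2n''/3$ columns from each of the $\mathbb{Z}_4$-part $B^*$ and the $\mathbb{Z}_2$-part $B'$ and adds $n''$ columns to a new $\mathbb{Z}_4$-part, so the code produced lies in $\mathbb{Z}_4^{2(m_0-n''/3)}\times\mathbb{Z}_2^{2(n_0-n''/3)}\times\mathbb{Z}_4^{n''}$. I would then compute $3(n_0-n''/3)+n''=3n_0=4^{\gamma+\delta}-1$, matching (\ref{eq:2}), so that $n_0-n''/3=n'$; and $2(m_0-n''/3)+(n_0-n''/3)+n''=2m_0+n_0=(4^{\gamma+2\delta}-1)/3$, which by (\ref{eq:1}) equals $2m+n'+n''$, forcing $m_0-n''/3=m$. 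Thus $\overline C$ is an additive $1$-perfect code in $\DD{m}{n'+n''}$, as required.

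I do not expect a genuine obstacle: $1$-perfectness is inherited directly from Theorem~\ref{th:add-constr}, and the only real content is confirming that the three equations of Theorem~\ref{th:param} leave exactly the degrees of freedom realized by the construction, with the pair $(\gamma,\delta)$ fixing the ambient linear code and the value $n''$ together with the choice of the $\lambda_i$ sweeping out all admissible $(m,n',n'')$. The mild care needed is in the divisibility and range checks of the second paragraph and the column-counting identities of the third; everything reduces to the congruence $4\equiv 1 \pmod 3$ and the definitions of $m_0$ and $n_0$.
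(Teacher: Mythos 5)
Your proposal is correct and follows exactly the route the paper takes: specialize the construction of Theorem~\ref{th:add-constr} with $\gamma=\Gamma/2$, $\delta=\Delta/2$, using (\ref{eq:2}) for the divisibility of $n''$ by $3$ and (\ref{eq:3}) to guarantee enough columns $\lambda_i$ of $A'$ vanishing in the last $\gamma$ positions. The paper's own proof is a one-line remark (the divisibility and availability checks having already been noted in Section~\ref{s:add}), so your version simply makes explicit the same bookkeeping, including the parameter-matching computation $m=m_0-n''/3$, $n'=n_0-n''/3$ that the paper leaves implicit.
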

\begin{proof} It remains to note that if $n''$ meets (\ref{eq:3}), 
 then $A'$ has at least $n''$ columns with zeros 
 in the last $\gamma$ positions.
\end{proof}
In general, existence of additive $1$-perfect codes in the case when $m$, $n'$, $n''$ satisfy 
(\ref{eq:1})--(\ref{eq:3}) with odd $\Delta$ remains unsolved.
In the next section, we construct one such code.


%
%
%
\section{An additive code with $\Delta=3$}\label{s:3}
In this section, we construct an additive code in 
$\mathbb{Z}_4^{14}\times \mathbb{Z}_4^{7}$ 
that is $1$-perfect in $\DD{7}{7}$.
The check matrix is
$$\left(
\begin{array}{c@{\,}cc@{\,}cc@{\,}cc@{\,}cc@{\,}cc@{\,}cc@{\,}c|ccccccc}
 1&2& 2&2& 0&3& 3&2& 0&3& 1&3& 1&1&  1&0&0&1&2&3&1 \\
 0&3& 3&0& 2&3& 1&1& 3&3& 3&0& 0&2&  0&1&0&3&3&3&2 \\
 2&2& 0&3& 3&2& 0&3& 1&3& 1&1& 1&2&  0&0&1&2&3&1&1
\end{array}\right),
$$
and it can be directly checked that every nonzero syndrome is
covered by one of the seven pairs of left coordinates or
by one of the seven right coordinates.
Below, we briefly show a cyclic representation of this matrix, 
omitting some details and the algebraic background.
The columns of the matrix are considered as vectors
over $Z_4$ that represent elements of the Galois ring
$\mathrm{GR}(4^3)$.
Let $\xi$ be a primitive seventh root of $1$ in $\mathrm{GR}(4^3)$;
then, every element of $\mathrm{GR}(4^3)$ 
is uniquely represented as $a+2b$, 
$a,b\in\{0,\xi^0,\xi^1,{\ldots},\xi^6\}$.
The first $14$ columns of the matrix are divided into the pairs
$\xi^{i}+2\xi^{i+2}$,  $\xi^{i+1}+2\xi^{i+5}$, $i=0,1,\ldots,6$;
the last $7$ columns are $\xi^0$, $\xi^1$, \ldots, $\xi^6$.
It can be checked that
the syndromes $\xi^{i}+2\xi^{i+2}$,
$\xi^{i+1}+2\xi^{i+5}$, $\xi^{i+3}+2\xi^{i+6}$,
$\xi^{i}+2\xi^{i+6}$, $\xi^{i+1}+2\xi^{i+6}$, $\xi^{i+3}+2\xi^{i+4}$,
are covered by the pair of coordinates $(2i+1,2i+2)$, $i=0,1,2,3,4,5,6$. 
We see that $\xi^j+\xi^{j+k}$ occurs for every $k=1,2,3,4,5,6$.
The syndromes $\xi^{i}$,  $2\xi^{i}$, and $\xi^{i}+2\xi^{i}$
are covered by the coordinate $15+i$, $i=0,1,2,3,4,5,6$.
So, every nonzero syndrome 
$a+2b$, $a,b\in\{0,\xi^0,\xi^1,{\ldots},\xi^6\}$, $(a,b)\ne (0,0)$
is covered.
\section{Nonlinear codes}\label{s:nonl}
In this section, 
we use a variant of the product construction 
from \cite{Phelps:q,Mollard} 
to construct $1$-perfect codes in $\DD{m}{n}$, $2m+n=(4^\mu-1)/3$,
for rather wide spectrum of values of $m$.

Let for every $i$ from $1$ to $k$ and $j$ from $1$ to $r$,
$f_{i,j},g_{i,j}:\mathbb{E}_2^3 \to \mathbb{E}_2$
be two functions such that the set 
\begin{equation}\label{eq:Cij}
C_{i,j}=\{(\bar x,f_{i,j}(\bar x),g_{i,j}(\bar x))
\,:\, \bar x \in \mathbb{E}_2^3\}
\end{equation}
is a $1$-perfect code in the Hamming graph 
$H(5,4)=K\times K\times K\times K\times K$.
Let us define two \emph{generalized parity-check functions} on 
$(\mathbb{E}_2^3)^{kr}$:
\begin{eqnarray}
 f(\bar x_{1,1},{\ldots},\bar x_{k,r}) &=&
(f_1(\bar x_{1,1},{\ldots},\bar x_{1,r}),{\ldots},
 f_k(\bar x_{k,1},{\ldots},\bar x_{k,r})),\quad\mbox{where} \label{eq:f} \\
 f_i(\bar x_{i,1},{\ldots},\bar x_{i,r})&=&
 f_{i,1}(\bar x_{i,1})+{\ldots}+f_{i,r}(\bar x_{i,r});\nonumber\\
 g(\bar x_{1,1},{\ldots},\bar x_{k,r}) &=&
(g_1(\bar x_{1,1},{\ldots},\bar x_{k,1}),{\ldots},
 g_r(\bar x_{1,r},{\ldots},\bar x_{k,r})),\quad\mbox{where}  \label{eq:g} \\
 g_j(\bar x_{1,j},{\ldots},\bar x_{k,j})&=&
 g_{1,j}(\bar x_{1,j})+{\ldots}+ g_{k,j}(\bar x_{k,j}).\nonumber
\end{eqnarray}
\begin{lemma}[\cite{Phelps:q,Mollard}]\label{l:mol}
Let $C'$ and $C''$ be two
$1$-perfect codes in 
$\mathbb{E}_2^k$ and $\mathbb{E}_2^r$, 
respectively, with the Hamming metric. 
And let $f$, $g$ be the generalized parity check functions 
defined as above. Then the set 
\begin{equation}
 C = \{ (\bar{\bar x},f(\bar{\bar x})+c', g(\bar{\bar x})+c''\,:\,
\bar{\bar x} \in (\mathbb{E}_2^3)^{kr}, c'\in C', c''\in C'' \}
\end{equation}
is a $1$-perfect code in the Hamming graph 
$H(3kr+k+r,4)=(K^3)^{kr}\times K^k \times K^r$.
\end{lemma}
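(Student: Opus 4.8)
The plan is to show that the radius-$1$ balls centred at the words of $C$ both cover $H(N,4)$, $N=3kr+k+r$, and have total volume equal to $|H(N,4)|$; since a $1$-perfect code is nothing but a partition into such balls, a covering together with a matching volume count forces the balls to be disjoint, hence to partition the space. The volume count is immediate: the first $3kr$ coordinates of a word of $C$ recover $\bar{\bar x}$, and then $c'$ and $c''$ are determined, so $|C|=4^{3kr}\,|C'|\,|C''|$. Substituting $|C'|=4^{k}/(1+3k)$ and $|C''|=4^{r}/(1+3r)$ (the perfectness of $C'$ and $C''$) and using $(1+3k)(1+3r)=1+3N$, one gets $|C|\,(1+3N)=4^{N}$, i.e.\ the balls have the right total volume. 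It remains to prove covering.

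For covering I would decode an arbitrary word $(\bar{\bar y},u,v)$, with $\bar{\bar y}=(\bar y_{1,1},\dots,\bar y_{k,r})$, $u\in\mathbb{E}_2^{k}$, $v\in\mathbb{E}_2^{r}$. Form the two syndromes $s'=u-f(\bar{\bar y})$ and $s''=v-g(\bar{\bar y})$; by perfectness of $C'$ and $C''$ there are unique $c'\in C'$, $c''\in C''$ with $d(s',c')\le 1$ and $d(s'',c'')\le 1$, and I let $i_0$, $j_0$ be the positions (if any) in which $s'$, $s''$ differ from $c'$, $c''$. If $s'=c'$ and $s''=c''$, then $(\bar{\bar y},u,v)$ is itself a codeword. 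If exactly one syndrome is nontrivial --- say $s'\ne c'$ at $i_0$ but $s''=c''$ --- then $(\bar{\bar y},f(\bar{\bar y})+c',g(\bar{\bar y})+c'')$ lies at distance $1$, the single error being in the $u$-part; the $v$-part case is symmetric.

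The decisive case is $s'\ne c'$ and $s''\ne c''$. The key observation is that altering one block $\bar x_{a,b}$ perturbs $f$ only in coordinate $a$ and $g$ only in coordinate $b$, so an error making both $s'_{i_0}$ and $s''_{j_0}$ nonzero must sit in block $(i_0,j_0)$: the two perfect decoders jointly localise it. Writing $\alpha=c'_{i_0}-s'_{i_0}\ne 0$, $\beta=c''_{j_0}-s''_{j_0}\ne 0$ and using that $\mathbb{E}_2$ has characteristic $2$, I would form the auxiliary $H(5,4)$-word $w=(\bar y_{i_0,j_0},\,f_{i_0,j_0}(\bar y_{i_0,j_0})+\alpha,\,g_{i_0,j_0}(\bar y_{i_0,j_0})+\beta)$ and invoke perfectness of $C_{i_0,j_0}$: $w$ has a unique codeword $(\bar x^\ast,f_{i_0,j_0}(\bar x^\ast),g_{i_0,j_0}(\bar x^\ast))$ within distance $1$. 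Because $\alpha,\beta\ne 0$, an error of $w$ in either parity coordinate (or distance $0$) would force $\alpha=0$ or $\beta=0$; hence the differing coordinate is among the first three, so $d(\bar y_{i_0,j_0},\bar x^\ast)=1$ while $f_{i_0,j_0}(\bar x^\ast)=f_{i_0,j_0}(\bar y_{i_0,j_0})+\alpha$ and $g_{i_0,j_0}(\bar x^\ast)=g_{i_0,j_0}(\bar y_{i_0,j_0})+\beta$. Replacing $\bar y_{i_0,j_0}$ by $\bar x^\ast$ and leaving all other blocks fixed then makes $f$ and $g$ absorb exactly $\alpha$ and $\beta$, so a routine check shows the resulting word of $C$ is at distance $1$ from $(\bar{\bar y},u,v)$.

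Uniqueness of the recovered codeword holds throughout, since $c'$, $c''$ are uniquely determined, the positions $i_0,j_0$ are then forced, and $\bar x^\ast$ is unique; together with the volume count this gives that $C$ is $1$-perfect. The main obstacle is exactly the double-error case: one must recognise that the independent perfect decoders for $C'$ and $C''$ pin a block error down to a single block $(i_0,j_0)$, and then extract from the $H(5,4)$-perfectness of $C_{i_0,j_0}$ --- through the auxiliary word $w$ --- that the correction both exists and lands in the first three (information) coordinates rather than in the parity symbols. Everything else is bookkeeping with the characteristic-$2$ arithmetic of $\mathbb{E}_2$.
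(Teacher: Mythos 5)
Your proposal is correct and follows essentially the same route as the paper's own argument (given there for the modified Lemma~\ref{l:mol2}, whose proof is explicitly noted to be independent of the block metric): a cardinality count reduces the task to covering, single parity-syndrome errors are absorbed directly, and the double-error case is resolved by decoding the auxiliary $H(5,4)$-word in $C_{i_0,j_0}$ and observing that, since both parity offsets are nonzero, the correction must land in the information coordinates. The only cosmetic difference is that you phrase the reduction via syndromes $s'=u-f(\bar{\bar y})$, $s''=v-g(\bar{\bar y})$, whereas the paper writes the arbitrary vertex directly as $(\bar{\bar x},f(\bar{\bar x})+c'+e',g(\bar{\bar x})+c''+e'')$.
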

Now, let us change the definition of the first $m$ pairs 
$(f_{i,j},g_{i,j})$ requiring the code
\begin{equation}\label{eq:Cij-mod}
C_{i,j}=\{(\bar x,f_{i,j}(\bar x),g_{i,j}(\bar x))
\,:\, \bar x \in \mathbb{E}_4\times\mathbb{E}_2\}
\end{equation}
to be $1$-perfect
in $\DD{1}{3}$. 
The functions $f$ and $g$ on $(\mathbb{E}_4\times\mathbb{E}_2)^m\times(\mathbb{E}_2^3)^{kr-m}$ defined by the same formulas 
(\ref{eq:f}), (\ref{eq:g}) but with new $f_{i,j}$, $g_{i,j}$ 
will be called \emph{modified generalized parity check functions}.
\begin{lemma}\label{l:mol2}
Let $C'$ and $C''$ be two
$1$-perfect codes in 
$\mathbb{E}_2^k$ and $\mathbb{E}_2^r$, 
respectively, with the Hamming metric. 
And let $f$, $g$ be the modified generalized parity-check functions
defined as above. 
Then the set 
\begin{equation}
 C = \{ (\bar{\bar x},f(\bar{\bar x})+c', g(\bar{\bar x})+c'')\,:\,
\bar{\bar x} \in (\mathbb{E}_4\times\mathbb{E}_2)^m\times(\mathbb{E}_2^3)^{kr-m}, c'\in C', c''\in C'' \}
\end{equation}
is a $1$-perfect code in  the graph
$(\Sh\times K)^m\times (K^3)^{kr-m}\times K^k \times K^r$.
\end{lemma}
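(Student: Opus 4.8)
The plan is to follow the proof of the product construction (Lemma~\ref{l:mol}) and to observe that it uses nothing about a block beyond the fact that its block code $C_{i,j}$ is $1$-perfect in the block graph. For the Hamming blocks this graph is $H(5,4)=(\KK^3)\times\KK\times\KK$, and for the first $m$ blocks it is now $\DD{1}{3}=(\Sh\times\KK)\times\KK\times\KK$; in both cases the input factor has $64$ vertices and the two trailing $\KK$'s carry $f_{i,j}$ and $g_{i,j}$, so the whole argument transfers verbatim once the input graph is treated abstractly. Concretely I would prove $1$-perfectness of $C$ from two ingredients: an exact cardinality count and a covering property, and then combine them.

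\textbf{Cardinality.} First I would note that $(\bar{\bar x},c',c'')\mapsto(\bar{\bar x},f(\bar{\bar x})+c',g(\bar{\bar x})+c'')$ is injective, since $\bar{\bar x}$ is read off from the first part and then $c'$, $c''$ are recovered by subtracting $f(\bar{\bar x})$ and $g(\bar{\bar x})$. Hence $|C|=64^{kr}\,|C'|\,|C''|$. Writing $N=3kr+k+r$, the ambient graph is $\DD{m}{3kr-2m+k+r}$, which has $4^N$ vertices and valency $3N$, so the radius-$1$ ball has size $1+3N$. Using $|C'|=4^k/(1+3k)$ and $|C''|=4^r/(1+3r)$ together with the identity $(1+3k)(1+3r)=1+3N$, one gets $|C|\cdot(1+3N)=4^N$; i.e. the total ball volume already equals the number of vertices.

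\textbf{Covering.} Next I would show every vertex $w=(\bar{\bar y},u,v)$ is within distance $1$ of a codeword. Put $\sigma=u-f(\bar{\bar y})$ and $\tau=v-g(\bar{\bar y})$; by $1$-perfectness of $C'$ and $C''$ there are unique $c'\in C'$, $c''\in C''$ with $a:=\sigma-c'$ and $b:=\tau-c''$ of weight $\le1$. If $\mathrm{wt}\,a=\mathrm{wt}\,b=0$ then $w\in C$; if exactly one of them has weight $1$, correcting that single nonzero check coordinate (keeping $\bar{\bar x}=\bar{\bar y}$) yields a codeword at distance $1$. The one substantial case is $\mathrm{wt}\,a=\mathrm{wt}\,b=1$, say $a$ nonzero in position $i_0\le k$ and $b$ nonzero in position $j_0\le r$. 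Here I would invoke the block code $C_{i_0,j_0}$: the block-graph vertex $w_0=\bigl(\bar y_{i_0j_0},\,f_{i_0j_0}(\bar y_{i_0j_0})+a_{i_0},\,g_{i_0j_0}(\bar y_{i_0j_0})+b_{j_0}\bigr)$ is not in $C_{i_0,j_0}$ (as $a_{i_0}\ne0$), so by $1$-perfectness it is at distance exactly $1$ from a unique block codeword $\bigl(\bar x_{i_0j_0},f_{i_0j_0}(\bar x_{i_0j_0}),g_{i_0j_0}(\bar x_{i_0j_0})\bigr)$. Since $\bar x_{i_0j_0}=\bar y_{i_0j_0}$ would force distance $2$ (both checks would differ), the unique differing coordinate lies in the input; hence $\bar x_{i_0j_0}$ is at input-distance $1$ from $\bar y_{i_0j_0}$ and $f_{i_0j_0}(\bar x_{i_0j_0})-f_{i_0j_0}(\bar y_{i_0j_0})=a_{i_0}$, $g_{i_0j_0}(\bar x_{i_0j_0})-g_{i_0j_0}(\bar y_{i_0j_0})=b_{j_0}$. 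Replacing only the block $(i_0,j_0)$ input by $\bar x_{i_0j_0}$ changes $f(\bar{\bar y})$ solely in coordinate $i_0$ by $a_{i_0}$ and $g(\bar{\bar y})$ solely in coordinate $j_0$ by $b_{j_0}$, so with the resulting $\bar{\bar x}$ and the same $c'$, $c''$ one gets $u-f(\bar{\bar x})=c'\in C'$ and $v-g(\bar{\bar x})=c''\in C''$; this codeword is at distance $1$ from $w$.

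\textbf{Conclusion and main obstacle.} Covering together with the cardinality equality forces a partition: the radius-$1$ balls cover the $4^N$ vertices and their total volume $|C|\cdot(1+3N)$ already equals $4^N$, so they are pairwise disjoint and $C$ is $1$-perfect. I expect the only genuine difficulty to be the case $\mathrm{wt}\,a=\mathrm{wt}\,b=1$, where a single atomic error in the big graph must simultaneously repair two \emph{aggregated} parity coordinates $f_{i_0}$ and $g_{j_0}$; this is precisely where the $1$-perfectness of the block code $C_{i_0,j_0}$ is used, and — crucially for the present lemma — the argument is insensitive to whether block $(i_0,j_0)$ is a Hamming block or a Doob block.
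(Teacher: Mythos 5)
Your proposal is correct and follows essentially the same route as the paper's proof: an exact cardinality count reducing the task to a covering argument, decomposition of an arbitrary vertex via the $1$-perfectness of $C'$ and $C''$ into error terms $e'$, $e''$ of weight at most $1$, and, in the only nontrivial case where both are nonzero, an appeal to the $1$-perfectness of the block code $C_{i,j}$ to replace the block input by an adjacent one that repairs both aggregated parity coordinates at once. Your observation that the argument is insensitive to whether the block is a Hamming or a Doob block is exactly the remark the paper makes parenthetically at the corresponding point.
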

\begin{proof}
It is easy to count that the cardinality of $C$ equals
the cardinality of the space divided by the cardinality 
$(3k+1)(3r+1)$ of a ball of radius $1$. So, it is sufficient to show
that every vertex is within radius $1$ from some code vertex.
Since $C'$ and $C''$ are $1$-perfect codes,
every vertex $X$ is representable in the form
$$(\bar{\bar x},f(\bar{\bar x})+c'+e', g(\bar{\bar x})+c''+e'')$$
where $c'$, $c''$ are codewords of $C'$, $C''$, respectively, 
$e'$, $e''$ are of weight at most $1$.
If $e'=\overline 0$ or $e''=\overline 0$, 
then $X$ is at distance $0$ or $1$ from the codeword 
$(\bar{\bar x},f(\bar{\bar x})+c', g(\bar{\bar x})+c'')$.
It remains to consider the case $e',e''\not=\overline 0$.
Let $e'$, $e''$ have nonzero values $y'$, $y''$ 
in the $i$th and $j$th positions, respectively.
Consider the tuple 
$\bar y=(\bar x_{i,j},f_{i,j}(\bar x_{i,j})+y',g_{i,j}(\bar x_{i,j})+y'')$,
where $\bar x_{i,j}$ is the $ij$th block of the tuple $X$.
Since the code $C_{i,j}$ is $1$-perfect, there is 
$\bar z=(\bar v,f_{i,j}(\bar v),g_{i,j}(\bar v))$
such that $\bar y$ is at distance $1$ from $\bar z$
(note that these arguments are independent of the metric space
$\bar x_{i,j}$ and $\bar v$ belong to; 
it can be $\DD{0}{3}$, $\DD{1}{1}$, or even any other metric space provided
(\ref{eq:Cij}),(\ref{eq:Cij-mod}) is a $1$-perfect code).
Clearly, $\bar y$ and $\bar z$ differ in the parts
$\bar x_{i,j}$, $\bar v$ and coincide in the last two positions.
Then, replacing $\bar x_{i,j}$ 
by $\bar v$ in $X$ results in a code vertex from $C$ 
at distance $1$ from $X$.
\end{proof}

It remains to note the fillowing:
\begin{lemma}\label{l:fg}
There are functions $f^0$, $g^0$: $\mathbb{E}_2^3 \to \mathbb{E}_2$ and $f^1$, $g^1$: $\mathbb{E}_4\times\mathbb{E}_2 \to \mathbb{E}_2$ such that the sets 
$
\{(\bar x,f^0(\bar x),g^0(\bar x))
\,:\, \bar x \in \mathbb{E}_4^3\}
$ 
and
$
\{(\bar x,f^1(\bar x),g^1(\bar x))
\,:\, \bar x \in \mathbb{E}_4\times\mathbb{E}_2\}
$ 
are $1$-perfect codes in $\DD{0}{5}$ and $\DD{1}{3}$, respectively.
\end{lemma}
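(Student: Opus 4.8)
The plan is to handle both halves uniformly: first produce \emph{some} $1$-perfect code in each of the two graphs (existence is already at hand), and then show that any such code can automatically be rewritten in the required \emph{systematic} form, in which two distinguished $K$-coordinates are functions of the remaining ones. The engine of the argument is a minimum-distance observation that forces the functional representation, so no explicit code needs to be written down.

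For existence, a $1$-perfect code in $H(5,4)=\DD{0}{5}$ is the classical quaternary Hamming code of length $(4^2-1)/3=5$, which is precisely the degenerate case $m=\delta=0$ noted in Remark~\ref{rem:m0}. A $1$-perfect code in $\DD{1}{3}$ is supplied by Corollary~\ref{cor:add-exist} applied with $m=1$, $n'=0$, $n''=3$: these values meet (\ref{eq:1})--(\ref{eq:3}) with $\Gamma=0$ and the even value $\Delta=2$. (Alternatively, one may invoke the diameter-$5$ construction of \cite{KoolMun:2000}.)

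Next I would pass to systematic form. Since the radius-$1$ balls around the codewords partition the vertex set, they are disjoint, which in a distance-regular graph forces distinct codewords to lie at distance at least $3$. As \emph{information coordinates} I take the first three $K$-factors in the $H(5,4)$ case, and the $\Sh$-factor together with one $K$-factor in the $\DD{1}{3}$ case; in either case the complementary coordinates are exactly two $K=K_4$ factors. Two codewords agreeing on the information coordinates can differ only in those two factors, each of diameter $1$, so their distance is at most $2<3$ and they must coincide. Hence the projection onto the information coordinates is injective, and since its domain and the code have the same cardinality ($4^3=64$ for $H(5,4)$ and $4\cdot 4=16$ for $\DD{1}{3}$), it is a bijection. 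Reading off the two check coordinates as the corresponding components of the inverse map yields the desired $f^0,g^0\colon \mathbb{E}_2^3\to\mathbb{E}_2$ and $f^1,g^1\colon \mathbb{E}_4\times\mathbb{E}_2\to\mathbb{E}_2$.

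The single point requiring care — and the only real obstacle — is the choice of which coordinates to retain in the $\DD{1}{3}$ case: the Shrikhande coordinate has diameter $2$, so if it were deleted together with a $K$-coordinate, two codewords could disagree at distance $2+1=3$ while agreeing elsewhere, and injectivity would break. Keeping the $\Sh$-factor inside the information set and deleting only $K$-factors caps the deleted diameter at $2$ and saves the argument. A minor bookkeeping issue is that Corollary~\ref{cor:add-exist} delivers the code in the $\mathbb{Z}_4$-representation of the $K$-factors; but the distance count above is representation-independent, so once bijectivity is established one simply re-expresses the two check $K$-coordinates in the $\mathbb{E}_2$-representation to obtain $f^1,g^1$ with exactly the stated domain and range.
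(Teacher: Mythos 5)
Your proof is correct and is essentially the argument the paper compresses into its first sentence (``the existence of $f^i$, $g^i$ follows directly from the existence of $1$-perfect codes in the corresponding graph''): you make explicit why any $1$-perfect code in $H(5,4)$ or $\DD{1}{3}$ is systematic over the chosen information coordinates, via the minimum distance $3$ forced by disjoint balls versus the diameter-$2$ bound on the two deleted $K_4$-factors, and you correctly identify the one delicate point, namely that the Shrikhande factor must be kept inside the information set. The paper then goes further and writes down explicit formulas for $f^0,g^0,f^1,g^1$, which your abstract argument makes unnecessary but which have the advantage of being directly verifiable without appealing to the earlier constructions. One arithmetic slip to fix: for $\DD{1}{3}$ the domain $\mathbb{E}_4\times\mathbb{E}_2$ has $16\cdot 4=64$ elements (not $4\cdot 4=16$, since $|\mathbb{E}_4|=16$), and the code has $4^5/16=64$ codewords, so the two cardinalities still agree and your bijectivity conclusion stands.
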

\begin{proof}
The existence of functions $f^i$, $g^i$ follows directly 
from the existence of $1$-perfect codes in the corresponding graph 
(\cite{Golay:49} and \cite{KoolMun:2000}, respectively). To be explicit,
we suggest direct formulas:
$$
\begin{array}{lll}
f^0(x,y,z) = x+y+z, &\quad 
g^0(x,y,z) = x+\omega y+\oomega z, 
&\quad x,y,z\in \mathbb{E}_2; \\[0.7ex]
 f^1(\omega x + y, \varphi(z)) = \varphi(x+y+z), &\quad
 g^1(\omega x + y, \varphi(z)) = \varphi(x+2y+3z),
 &\quad x,y,z\in \mathbb{Z}_4,
\end{array}
$$
where $\varphi$ is any bijection between the elements of $\mathbb{Z}_4$ and $\mathbb{E}_2$.
\end{proof}
Finally, we can state the following.
\begin{theorem}\label{th:nonlin}
Assume that positive integers $m$, $n$, $\mu$
satisfy 
\begin{eqnarray}
2m+n &=& ({4^\mu-1})/3, \nonumber\\ \label{eq:m-kr}
m&\le&
\left\{\begin{array}{ll}
(4^\mu - 2.5\cdot 2^\mu+1)/9 &\ \mbox{if $\mu$ is odd}, \\[0.7ex]
(4^\mu - 2\cdot 2^\mu+1)/9 &\ \mbox{if $\mu$ is even}.
\end{array}\right.
\end{eqnarray}
Then there is a $1$-perfect code in the Doob graph $\DD{m}{n}$.
\end{theorem}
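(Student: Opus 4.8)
The plan is to derive Theorem~\ref{th:nonlin} directly from the product construction of Lemma~\ref{l:mol2}, fed with the explicit single-block codes of Lemma~\ref{l:fg}, the whole difficulty being the bookkeeping of parameters. Recall that Lemma~\ref{l:mol2} produces a $1$-perfect code in $(\Sh\times\KK)^m\times(\KK^3)^{kr-m}\times\KK^k\times\KK^r$ whenever $1$-perfect codes $C'$, $C''$ in $\mathbb{E}_2^k$, $\mathbb{E}_2^r$ are available. I first observe that this graph is a Doob graph. The factor $\Sh\times\KK$ is $\DD11$, so $(\Sh\times\KK)^m$ contributes $m$ Shrikhande factors and $m$ copies of $\KK$, while $(\KK^3)^{kr-m}$, $\KK^k$, $\KK^r$ contribute $3(kr-m)+k+r$ further copies of $\KK$ and no Shrikhande factor. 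Hence the graph equals $\DD{m}{n}$ with $n=3kr-2m+k+r$ and diameter $2m+n=3kr+k+r$.

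Next I would pin down $k$ and $r$. For the Hamming codes $C'$, $C''$ to exist one must take $k=(4^a-1)/3$ and $r=(4^b-1)/3$ for integers $a,b\ge 1$, and such $4$-ary Hamming codes then do exist. A direct computation gives $3kr+k+r=(4^{a+b}-1)/3$. So setting $\mu=a+b$ makes the diameter equal $(4^\mu-1)/3$ automatically, and the construction realizes $\DD{m}{n}$ with the prescribed $n=(4^\mu-1)/3-2m$ for every $m$ with $0\le m\le kr$, since $m$ merely counts how many of the $kr$ blocks we convert into Doob blocks.

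The statement therefore reduces to showing that the bound on $m$ is exactly the largest value of $kr$ attainable over admissible factorizations $\mu=a+b$. Writing $kr=(4^\mu-(4^a+4^b)+1)/9$, I maximize $kr$ by minimizing $4^a+4^b$ subject to $a+b=\mu$, $a,b\ge 1$; by convexity the minimum is attained at the most balanced split. For even $\mu$ this is $a=b=\mu/2$, giving $4^a+4^b=2\cdot 2^\mu$ and $kr=(4^\mu-2\cdot 2^\mu+1)/9$; for odd $\mu$ it is $\{a,b\}=\{(\mu-1)/2,(\mu+1)/2\}$, giving $4^a+4^b=2.5\cdot 2^\mu$ and $kr=(4^\mu-2.5\cdot 2^\mu+1)/9$. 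These are precisely the two bounds in the statement, so the hypothesis $m\le$(bound) is exactly the condition $m\le kr$ for the balanced factorization.

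With the balanced factorization fixed, assembling the code is immediate: take the Hamming $1$-perfect codes $C'\subset\mathbb{E}_2^k$ and $C''\subset\mathbb{E}_2^r$; for the $m$ chosen Doob blocks use the single-block code $(f^1,g^1)$ in $\DD13$, and for the remaining $kr-m$ blocks use $(f^0,g^0)$ in $\DD05$, both supplied by Lemma~\ref{l:fg}; then apply Lemma~\ref{l:mol2}. I expect no genuine obstacle here — the argument is essentially parameter accounting. The only point demanding care is the convexity optimization that identifies the stated bound with $kr$ for the balanced split, together with the verification that this split always leaves $m\le kr$, so that enough blocks are present to be converted to Doob type and the prescribed $n$ is nonnegative.
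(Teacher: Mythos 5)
Your proposal is correct and follows essentially the same route as the paper: apply Lemma~\ref{l:mol2} with $4$-ary Hamming codes of lengths $k=(4^a-1)/3$, $r=(4^b-1)/3$ for the balanced split $a+b=\mu$, which is exactly the paper's choice $k=(2^{\mu-1}-1)/3$, $r=(2^{\mu+1}-1)/3$ (odd $\mu$) or $k=r=(2^{\mu}-1)/3$ (even $\mu$), and check $m\le kr$. Your added convexity argument identifying the balanced split as the one maximizing $kr$ is a detail the paper leaves implicit, but the construction and parameter bookkeeping coincide.
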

\begin{proof}
By Lemma~\ref{l:mol2}, we can construct 
a $1$-perfect code in a graph
$(\Sh\times K)^m\times (K^3)^{kr-m}\times K^k \times K^r$
isomorphic to $\DD{m}{n}$, where 
$k=(2^{\mu-1}-1)/3$, $r=(2^{\mu+1}-1)/3$
or $k=r=(2^{\mu}-1)/3$, depending on the parity of $\mu$.
The condition $m\le kr$ is guaranteed by (\ref{eq:m-kr}).
\end{proof}
\section{Open problems}\label{s:fin}
\begin{problem} For every value $(m,n)$ satisfying $2m+n = (4^\mu-1)/3$ and 
not covered by the constructions in
Sections~\ref{s:constr}--\ref{s:nonl}, construct a $1$-perfect code in $\DD{m}{n}$
or prove its nonexistence.
In particular, do there exist $1$-perfect codes in $\DD{6}{9}$, $\DD{9}{3}$, $\DD{10}{1}$?
\end{problem}
\begin{problem}
For every value $(m,n',n'')$ satisfying (\ref{eq:1})--(\ref{eq:3}) with odd $\Delta\ge 3$
(except the case $(7,0,7)$, considered in Section~\ref{s:3}),
construct an additive $1$-perfect code in
$\mathbb{Z}_4^{2m}\times\mathbb{Z}_2^{2n'}\times\mathbb{Z}_4^{n''}$ with 
the $\DD{m}{n'+n''}$-metric or prove its nonexistence.
In particular, does there exist an additive $1$-perfect code 
in $\mathbb{Z}_4^{16}\times\mathbb{Z}_2^{2}\times\mathbb{Z}_4^{4}$ 
with the $\DD{8}{5}$-metric?
\end{problem}

\providecommand\href[2]{#2} \providecommand\url[1]{\href{#1}{#1}}
 \def\DOI#1{{\small {DOI}:
  \href{http://dx.doi.org/#1}{#1}}}\def\DOIURL#1#2{{\small{DOI}:
  \href{http://dx.doi.org/#2}{#1}}}

\end{document}